\theoremstyle{plain}
\newtheorem{theorem}{Theorem}[section]
\newtheorem{lemma}[theorem]{Lemma}
\newtheorem{lem}[theorem]{Lemma}
\theoremstyle{remark}
\newtheorem{rem}[theorem]{Remark}
\newtheorem{example}[theorem]{Example}
\newtheorem{ass}[theorem]{Assumption}
\renewcommand{\d}{{\rm d}}
\newcommand{\id}{{\rm id}}
\newcommand{\norm}[1]{\left\Vert #1 \right\Vert}
\DeclareMathOperator*{\argmin}{argmin}
\begin{document}

\begin{frontmatter}
\title{Wasserstein convergence rates of increasingly concentrating probability measures}
\runtitle{Convergence rates of increasingly concentrating probability measures}

\begin{aug}
\author[A]{\fnms{Mareike}~\snm{Hasenpflug}\ead[label=e1]{mareike.hasenpflug@uni-passau.de}},
\author[A]{\fnms{Daniel}~\snm{Rudolf}\ead[label=e2]{daniel.rudolf@uni-passau.de}}
\and
\author[B]{\fnms{Bj\"orn}~\snm{Sprungk}\ead[label=e3]{bjoern.sprungk@math.tu-freiberg.de}}
\address[A]{
	Faculty of Computer Science and Mathematics,
	University of Passau, Innstra{\ss}e 33, 94032 Passau, Germany\printead[presep={,\ }]{e1,e2}}

\address[B]{
	Technische Universit\"at Bergakademie Freiberg, Pr\"uferstr. 9, 09596 Freiberg, Germany\printead[presep={,\ }]{e3}}
\end{aug}

\begin{abstract}
For $\ell\colon \mathbb{R}^d \to [0,\infty)$ we
consider the sequence of probability measures $\left(\mu_n\right)_{n \in \mathbb{N}}$, where $\mu_n$ is determined by a density that is 
proportional to $\exp(-n\ell)$. 
We allow for infinitely many global minimal points of $\ell$, as long as they form a finite union of compact manifolds.
In this scenario, we 
show estimates for the $p$-Wasserstein convergence of $\left(\mu_n\right)_{n \in \mathbb{N}}$ to its limit measure.
Imposing regularity conditions we obtain a speed of convergence of  $n^{-1/(2p)}$ and adding a further technical assumption, we can improve this to a $p$-independent rate of $1/2$ for all orders $p\in\mathbb{N}$ of the Wasserstein distance. 
\end{abstract}

\begin{keyword}[class=MSC]
\kwd[Primary ]{60B10}
\kwd{58C99}
\end{keyword}

\begin{keyword}
\kwd{Wasserstein distance}
\kwd{coupling construction}
\end{keyword}

\end{frontmatter}
\section{Introduction}


We consider a sequence $(\mu_n)_{n\in\mathbb{N}}$ of increasingly concentrating probability measures on $\mathbb{R}^d$ given by
\[
\mu_n({\rm d}x) = \frac{1}{Z_n} \exp(-n \ell(x)) \mu_0({\rm d} x),
\qquad
Z_n := \int_{\mathbb{R}^d} \exp(-n \ell(x)) \mu_0({\rm d}x),
\]
where $n\in\mathbb{N}$, $\ell\colon \mathbb{R}^d \to [0,\infty)$ and 
$\mu_0$ denotes a reference measure on $\mathbb{R}^d$. It is intuitively plausible (and confirmed in \cite{Hwang}) that the limit measure, say $\mu$, of such a sequence 
is only supported
on the set of global minimal points of $\ell$. Allowing for infinitely many global minimal points that form a finite union of compact manifolds, we are driven by the question about explicit convergence rates and estimates for the $p$-Wasserstein distance (regarding the Euclidean metric and $p\in\mathbb{N}$) of $\mu_n$ to its limit measure $\mu$.


Let us provide the three main reasons for our interest 
in this 
question:
\begin{itemize} 
	\item 
	In \cite{Hwang} weak convergence of the sequence $\left(\mu_n\right)_{n \in \mathbb{N}}$ to explicitly stated limit measures is established and
	in our considered setting Wasserstein convergence and weak convergence are equivalent\footnote{This follows by virtue of \cite[Theorem 20.1.8]{Douc}.}. 
	Thus, explicit estimates and convergence rates of the Wasserstein distance allow us to quantify the
	weak convergence results of \cite{Hwang}. 
	\item  In Bayesian statistics, large sets of observational data or similarly small observational noise levels typically lead to strongly concentrated posterior distributions. From a modeling point of view this is a desirable scenario, however from a computational perspective it is also challenging, since most sampling methods suffer from high concentration. Knowledge about the rate at which the sequence of concentrating posterior distributions converges to its limit $\mu$ is important for the design and guarantees of cheap proxies of the posterior that mimic the same concentration behavior. 
	For example, in \cite{RuSp22,Sprungk} a Gaussian measure 
	given by the Laplace approximation of the posterior distribution is used to design concentration robust numerical integration methods.
	
	\item Recently in \cite{Debortoli} non-asymptotic estimates of the $1$-Wasserstein distance of $\mu_n$ to its limit measure (under appropriate assumptions) have been proven. 
	Those are used in different contexts, e.g., in delivering quantitative convergence of a stochastic gradient Langevin dynamics in a non-convex minimization setting or in studying the limit of a macrocanoncial model turning to a twisted microcanonical one. 
	Moreover, in \cite{Bras} similar results have been used to verify the convergence of a Langevin-simulated annealing algorithm with multiplicative noise.

\end{itemize}	
Those points indicate, apart from a theoretical point of view, that a quantitative Wasserstein convergence analysis based on concentrating probability measures applies in different mathematical fields.

Now we discuss the main assumptions and contributions. 
Regarding the function $\ell$ we require a manifold, an integrability, a tail, a smoothness, and a positive definiteness condition. 
We briefly comment on those conditions that are precisely formulated in Assumption~\ref{ass: reg_to_ell} below. The manifold requirement states that the set of minimal points of $\ell$ consists of a finite union of compact manifolds. The integrability assumption guarantees that the $p$-Wasserstein distance of interest is finite. Moreover, the manifold \textcolor{black}{condition} in combination with the tail condition implies that the set of global minimal points is non-empty. The positive definiteness assumption states that the Hessian of suitable projections of $\ell$ is positive definite at the minimal points of $\ell$.
Further, the smoothness, tail and positive definiteness condition allow us to apply Laplace's method for bounding several appearing integrals.
All imposed assumptions are closely related to the ones that are formulated and used in \cite{Hwang} in the treatment of the weak convergence statement of the most general case considered there. Therefore, we \textcolor{black}{have in particular} that the limit measure (w.r.t. weak convergence) of the sequence $(\mu_n)_{n\in\mathbb{N}}$ is well-defined. We denote it as $\mu$ and emphasize that its support is given through the set of minimal points.

We turn to the main results. In Theorem~\ref{Thm: Manifolds more general version} we prove that there exists a $K\in (0,\infty) $ such that 
\begin{equation}
\label{eq: n_1/2p}
W^p(\mu_n,\mu) \leq K n^{-1/(2p)}, \quad \forall n\in\mathbb{N},
\end{equation}
where $W^p$ denotes the $p$-Wasserstein distance for $p\in\mathbb{N}$. The $p$-dependence of the right hand-side is undesirable. \textcolor{black}{
	However, there is convincing numerical evidence, provided within the
	setting of Example~\ref{Ex: Slower convergence rate}, that indicates that a $p$ dependence can in general not be avoided under the assumptions of Theorem~\ref{Thm: Manifolds more general version}.
	Imposing an additional technical condition, that} ensures convergence from below of a sequence of proxies to the limit $\mu$, we show in Theorem~\ref{Thm: Manifolds convergence from below} that there is a number $K\in (0,\infty)$ such that
\begin{equation}
\label{eq: n_1/2}
W^p(\mu_n,\mu) \leq K n^{-1/2}, \quad \forall n\in\mathbb{N}.
\end{equation}
In general, the estimate of
\eqref{eq: n_1/2} cannot be improved w.r.t. the dependence on $n^{-1/2}$, since considering $\mu_n=\mathcal{N}(0,n^{-1})$, that is, $\mu_n$ is the normal distribution with mean zero and variance $n^{-1}$, and $\mu=\delta_0$ being the Dirac measure, one obtains $W^p(\mu_n,\mu) = \big(\frac{p!}{2^{p/2}(p/2)!}\big)^{1/p} n^{-1/2}$ for even $p$. For details we refer to Example~\ref{Ex: 1-d normal distribution} below.

In particular, the proof of Theorem~\ref{Thm: Manifolds convergence from below}, see \eqref{eq: n_1/2}, relies on a coupling construction between $\mu_n$ and $\mu$. Also the main part of the proof of Theorem~\ref{Thm: Manifolds more general version}, see \eqref{eq: n_1/2p}, is based on constructing a coupling between $\mu_n$ and a proxy of the limit distribution $\mu$ that respects a tubular neighbourhood structure of the manifold of the minimal points. The idea of the coupling is to shift the mass of $\mu_n$ that is in the aforementioned tubular neighbourhood to the manifold. The remaining mass outside of this set around the manifold is for sufficiently large $n$ very small, such that eventually it  can be almost arbitrarily used to fill the mass of the limit measure $\mu$ on the manifold.
For an illustration we refer to Figure~\ref{fig: coupling}. 
As the above mentioned normal distribution example shows, our coupling construction is optimal w.r.t. the convergence rate in $n$.
\begin{figure}[htb]
	\centering
	\subfloat[][]{\includegraphics[width=0.65\linewidth]{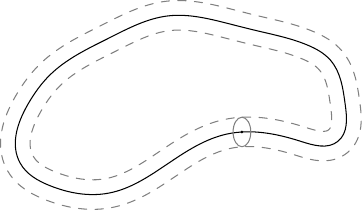}}%
	\qquad
	\subfloat[][]{\includegraphics[width=0.25\linewidth]{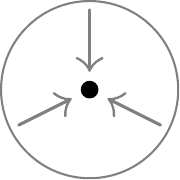}}%
	\caption{In (a) the black curve represents a $2$-dimensional manifold  
		given through the minimal points of $\ell$ surrounded by a $3$-dimensional tubular neighbourhood 
		determined by the dashed lines. The ellipse in (a) indicates a slice of the tubular neighbourhood and in (b) the same slice is drawn zoomed-in in two dimensions where it becomes a circle. The arrows indicate that the mass of $\mu_n$ within the tubular neighbourhood is, by the coupling, shifted to the manifold.}%
	\label{fig: coupling}
\end{figure}

Now we provide the outline of the paper: 
Section~\ref{Sec: Preliminaries} sets out our underlying setting, recalls the basic definitions about the $p$-Wasserstein distance and, in particular, in Section~\ref{Sec: preliminaries manifolds} we provide the required objects related to the manifold structure of the set of minimal points of $\ell$. 
Section~\ref{Sec: Main results} starts with the formulation of the assumptions and main results.
\textcolor{black}{We dedicate Section \ref{Sec: Diskussion of CB-condition} to the discussion of the additional technical assumption that we impose for obtaining $p$-independent convergence rates.}
 In Section~\ref{Sec: Examples} the developed theory is applied in illustrative scenarios and Section~\ref{Sec: Literature_review} contains a literature review with a special focus to quantitative results regarding sequences of concentrating probability measures.
After that, the proofs of the main theorems are delivered. 
Finally, some statements and proofs of auxiliary estimates required in the verification of the main contributions
are contained in the appendix.

\section{Preliminaries}\label{Sec: Preliminaries}
	\textcolor{black}{Let $\mathbb{N}$ be the positive natural numbers and $ \mathbb{N}_0 := \mathbb{N} \cup \{0\} $.}
For $d\in \mathbb{N}$ we consider $\mathbb{R}^d$ equipped with the Borel-$\sigma$-algebra $\mathcal{B}(\mathbb{R}^d)$\footnote{For $D\in \mathcal{B}(\mathbb{R}^d)$ the object $\mathcal{B}(D)$ denotes the trace $\sigma$-algebra of $\mathcal{B}(\mathbb{R}^d)$ in $D$.} induced by the Euclidean norm, and the Lebesgue measure $\lambda_d$. Let $\mu_0$ be a reference measure on $(\mathbb{R}^d,\mathcal{B}(\mathbb{R}^d))$ with strictly positive and infinitely often continuously differentiable Lebesgue density $\pi_0\colon \mathbb{R}^d \to (0,\infty)$, i.e., 
\begin{equation} 
\label{eq: prior}
\mu_0(\d x) = \pi_0(x)\, \lambda_d (\d x).
\end{equation}
Let $\ell: \mathbb{R}^d \to [0,\infty)$ be a measurable function that specifies a sequence of probability measures $(\mu_n)_{n\in\mathbb{N}}$ on $\left(\mathbb{R}^d, \mathcal{B}(\mathbb{R}^d)\right)$ 
with
\begin{equation}
\label{Eq: Definition of phi_n}
\mu_n(\d x) = \frac{1}{Z_n} \exp(-n\ell(x)) \,\mu_0(\d x), \qquad n \in \mathbb{N},
\end{equation}
where $Z_n$ denotes the normalizing constant. In terms of the Radon-Nicodym derivative this yields $
\frac{\d \mu_n}{\d \mu_0}(x) = \frac{\exp(-n\ell(x)) }{Z_n}$ for $x\in \mathbb{R}^d.$

\subsection{Wasserstein distance}
To measure the difference between probability measures, we use the Wasserstein distance.
For two probability measures $\mu$ and $\nu$ on $\left(\mathbb{R}^d, \mathcal{B}(\mathbb{R}^d)\right)$, we say that a probability measure 
$\gamma$ on $\left(\mathbb{R}^{2d}, \mathcal{B}(\mathbb{R}^{2d})\right)$ is a \emph{coupling} of $\mu$ and $\nu$ if for all sets $A \in \mathcal{B}(\mathbb{R}^d)$ we have
\begin{align*}
\gamma\left(A \times \mathbb{R}^d\right) &= \mu(A), \qquad
\gamma\left(\mathbb{R}^d \times A\right) = \nu(A).
\end{align*}
In other words the marginal distributions of $\gamma$ are $\mu$ and $\nu$.
The set of all couplings of $\mu$ and $\nu$ is denoted by $C(\mu, \nu)$. For $p\geq1$ the \emph{$p$-Wasserstein distance} of the probability measures $\mu$ and $\nu$ induced by the Euclidean norm $\norm{\cdot}$ is defined by
\[
W^p(\mu, \nu) := \left( \inf_{\gamma \in C(\mu, \nu)} \int_{\mathbb{R}^{2d}} \|x-y\|^p \, \gamma(\textup{d}x, \textup{d}y)  \right)^{1/p}.
\]
It is well known that $W^p$ is a metric on the set of probability measures on $\left(\mathbb{R}^d, \mathcal{B}(\mathbb{R}^d)\right)$ that satisfy a $p$-th moment condition, see e.g. \cite[Chapter~6]{Villani}.

From its definition it is clear that for  any coupling $\gamma\in C(\mu,\nu)$ holds
\[
W^p(\mu, \nu) \leq \left( \int_{\mathbb{R}^{2d}} \|x-y\|^p \, \gamma(\textup{d}x, \textup{d}y) \right)^{1/p}.
\]
For estimating the $p$-Wasserstein distance, we frequently use this simple fact and for an extensive treatment of further properties we refer to \cite[Chapter 6]{Villani}.

Sometimes it is more convenient to work with random variables. 
To this end, let $({\color{black}\widetilde{\Omega},\widetilde{\mathcal{F}}},\mathbb{P})$ be a sufficiently rich probability space which serves as domain of all sequel random variables. For example, for random variables $X,Y$ mapping to $\mathbb{R}^d$ writing $(X,Y)\sim \gamma$ with $\gamma\in C(\mu,\nu)$ means that the tuple $(X,Y)$ has distribution $\gamma$. 
In particular, then
\begin{equation}
\label{eq: bnd_expected_coupl_diff}
W^p(\mu,\nu) \leq (\mathbb{E} \Vert X-Y\Vert^p)^{1/p}.
\end{equation}
In this context, let us mention that we also use the notation $\mathbb{P}^X$ and $\mathbb{P}^{(X,Y)}$ for referring to the distribution of $X$ and the tuple $(X,Y)$, respectively. 
Now we turn to the conceptual objects regarding the considered manifold.

\subsection{Manifold framework}\label{Sec: preliminaries manifolds}


Let $k \in \mathbb{N}$, and let $M$ be a $k$-dimensional compact differentiable orientable Riemannian manifold, whose volume element we denote by $\Omega$.
The integration of exterior differentiable forms over $M$ induces an intrinsic measure $\mathcal{M}$ on $M$, i.e., 
\begin{equation}\label{Eq: Intrinsic measure}
\mathcal{M}(A) := \int_M \mathbbm{1}_A \Omega
\end{equation}
for all $A \in \mathcal{B}(M)$, where $\mathbbm{1}_A$ denotes the indicator function of $A$, and $\mathcal{B}(M)$ is the Borel-$\sigma$-algebra on $M$ induced by its topology.
For more details on this see \cite[Chapter~6]{Boothby}.


For $u \in M$, denote by $T_u(M)$ the tangent space to $M$ at $u$. 
If $M$ is a subset of $\mathbb{R}^d$ and is endowed with the subspace topology such that the identity map $\id_M: M \to \mathbb{R}^d$ is infinitely often continuously differentiable,
then the tangent space $T_u(M)$, which is $k$-dimensional, can be interpreted as a subset of $\mathbb{R}^d$ for every $u \in M$.
We denote the orthogonal complement of $T_u(M)$ in $\mathbb{R}^d$ by $T_u(M)^\perp$ and
by $B_r^s(x)\subset \mathbb{R}^s$ the Euclidean ball with radius $r>0$ around $x\in \mathbb{R}^s$.


Then, by \cite[Theorem 11.1]{classes}, there exists an $\varepsilon_0> 0$ such that for all $\varepsilon\in(0,\varepsilon_0)$ there is an open neighbourhood $N(\varepsilon) \subseteq \mathbb{R}^d$ of $M$ that is diffeomorph to
\[
E(\varepsilon) := \bigcup_{u \in M} \{u\} \times \left\{v \in T_u(M)^\perp \mid \|v\|  < \varepsilon \right\}
\]
via the map $
(u, v) \mapsto u+v.
$
The neighbourhood $N(\varepsilon)$ of $M$ is called \emph{tubular neighbourhood}.
If there exist $v^{(1)}(u), \ldots, v^{(d-k)}(u) \in T_u(M)^\perp$ that form an orthonormal basis of $T_u(M)^\perp$, 
such that $v^{(1)}, \dots, v^{(d-k)}$ depend smoothly on $u \in M$, then the map
\begin{align}\notag
S: B_\varepsilon^{d-k}(0) \times M &\to N(\varepsilon), \\
\label{al: def_S}
(t_1, \ldots, t_{d-k}, u) &\mapsto u + t_1 v^{(1)}(u) + \ldots + t_{d-k} v^{(d-k)}(u)
\end{align}
is a diffeomorphism, since $B_\varepsilon^{d-k}(0) \times M$ and $E(\varepsilon)$ are diffeomorphic.


Weyl provides in \cite{Weyl} a formula for the Lebesgue measure restricted to the tubular neighbourhood $N(\varepsilon)$, denoted by $\lambda_d\vert_{N(\varepsilon)}$, using a measure on $B_\varepsilon^{d-k}(0) \times M$ induced by the intrinsic measure $\mathcal{M}$.
More detailed, there exist infinitely often continuously differentiable functions $G_a^b(j) :M \to \mathbb{R}$, for $1 \leq j \leq d-k$ and $a,b \in \{1, \ldots, k\}$ defining the measure $Q$ on $B_\varepsilon^{d-k}(0) \times M$ by the density
\begin{align}\label{Eq: Density of Q}
\begin{aligned}
h:  B_\varepsilon^{d-k}(0) \times M & \to [0,\infty), \\
(t_1, \ldots, t_{d-k}, u) & \mapsto \det\left[\left(\delta_a^b + \sum_{j=1}^{d-k} t_j \left(G_a^b(j)\right)(u)\right)_{a,b \in \{1,\ldots, k\}}\right]
\end{aligned}
\end{align}
with respect to the product measure $\lambda_{d-k}\vert_{B_\varepsilon^{d-k}(0)} \otimes \mathcal{M}$, where $\delta_a^b$ is the Kronecker delta. 
(Note that, since $h(0,\dots,0,u) = 1$ for all $u \in M$ and $M$ is compact, the function $h$ is strictly positive if $\varepsilon$ is chosen small enough.)
Then, the pushforward measure $S_*(Q)$ of $Q$ under the map $S$ satisfies
\begin{equation}\label{Eq: Imagemeasure of Q under T}
S_*(Q) = \lambda_d\vert_{N(\varepsilon)},
\end{equation}
i.e., $Q\big(S^{-1}(A)\big) = S_*(Q)(A) = \lambda_d\vert_{N(\varepsilon)}(A)$ for all measurable sets $A \subseteq N(\varepsilon)$.
Note that
\begin{equation}\label{Eq: Normalizing constant of Q}
Q \left(B_\varepsilon^{d-k}(0) \times M\right) = S_*(Q)\left( S(B_\varepsilon^{d-k}(0) \times M) \right) = \lambda_d\big(N(\varepsilon)\big)
\end{equation}
and 
\begin{align}\label{Eq: Measureconversion on tubular neighbourhood}
\notag		&\int_{N(\varepsilon)} \exp(-n\, \ell(x)) \pi_0(x) \, \lambda_d(\d x)
=\int_{N(\varepsilon)} \exp(-n\,\ell(x)) \pi_0(x) \, S_*(Q)(\d x)\\
&\qquad = \int_{B_\varepsilon^{d-k}(0) \times M} \exp\big(-n\,\ell(S(t,u))\big) \pi_0\big(S(t,u)\big) \, Q(\d t, \d u) \\
\notag				&\qquad  =  \int_{M} \int_{B_\varepsilon^{d-k}(0)} \exp\big(-n\,\ell(S(t,u))\big) \pi_0\big(S(t,u)\big) h(t,u) 
\, \lambda_{d-k}(\d t) \, \mathcal{M}( \d u).
\end{align}
For $n \in \mathbb{N}$ and $u\in M$ define the inner integral of the former expression as
\begin{equation}\label{Eq: Definition of zeta}
\zeta^{(n)}(u) := \int_{B_\varepsilon^{d-k}(0)}   \exp\big(-n\,\ell(S(t,u))\big) \pi_0\big(S(t,u)\big) h(t,u) \, \lambda_{d-k}(\d t).
\end{equation}	

\begin{rem}
	Intuitively a tubular neighbourhood associates to each point of the enclosed manifold 
	a `slice' of the surrounding $\mathbb{R}^d$.
	In this light, the quantity $\zeta^{(n)}(u)$ can be interpreted as the `slicewise'-varying part of the integral 
	$\int_{N(\varepsilon)} \exp(-n \ell(x)) \pi_0(x) \lambda_d(\d x)$ that is orthogonal to the manifold $M$.
\end{rem}

We summarize the required assumptions that guarantee that all objects defined before exist.
\begin{ass}\label{A: Assumption on manifolds}
	Let $M \subset \mathbb{R}^d$ be a $k$-dimensional compact
	connected differentiable
	orientable Riemannian manifold which is endowed with the subspace topology
	such that the identity map $\id_M: M \to \mathbb{R}^d$ is an 
	infinitely often continuously differentiable function.
	For all $u \in M$ we assume that there exists an orthonormal basis $v^{(1)}(u), \ldots, v^{(d-k)}(u) \in T_u(M)^\perp$ of $T_u(M)^\perp$,
	where $v^{(1)}, \dots, v^{(d-k)}$ depend smoothly on $u$.
\end{ass}

In the following we consider a collection of finitely many manifolds $M_i$ with $i=1,\dots,m$, where each of them satisfies Assumption~\ref{A: Assumption on manifolds}. Therefore, we index all the objects which correspond to the manifold $M_i$ by $i$, that is, $M_i$ is $k_i$-dimensional with intrinsic measure $\mathcal{M}_i$, tubular neighbourhood $N_i(\varepsilon)$ for $\varepsilon>0$, orthornormal basis $v_i^{(1)}(u), \ldots, v_i^{(d-k_i)}(u)$ of $T_u(M_i)^\perp$ for $u\in M_i$, diffeomorphism $S_i$, measure $Q_i$ with density $h_i$, and $\zeta^{(n)}_i$ as stated in \eqref{Eq: Definition of zeta}.

\section{Wasserstein estimates}\label{Sec: Main results}

After Hwang \cite{Hwang} studied (depending on regularity assumption) the weak convergence of the sequence $(\mu_n)_{n \in \mathbb{N}}$ to its limit measure, 
we are aiming for the rate of this convergence w.r.t. to the $p$-Wasserstein distance. Clearly, the limit measure of $(\mu_n)_{n \in \mathbb{N}}$ depends on the form of 
the set of global minimal points of the function $\ell$ in the exponent of the density of $\mu_n$. 

\subsection{Main results and assumptions}

Similarly to the most general case treated in \cite{Hwang}, we impose the following regularity conditions.

\begin{ass}  \label{ass: reg_to_ell}
	For a given $p \in \mathbb{N}$ the function $\ell\colon \mathbb{R}^d\to [0,\infty)$ satisfies a
	\begin{itemize}
		\item[(M)] \emph{Manifold condition:} For the set $R_\ell:= \{x \in \mathbb{R}^d \mid \ell(x) = 0\}$ there exist $m\in\mathbb{N}$, $k_1,\dots,k_m\in\mathbb{N}$ and pairwise disjoint $k_i$-dimensional manifolds $M_i$ for $i=1,\dots,m$ satisfying Assumption \ref{A: Assumption on manifolds},
		such that $R_\ell = \bigcup_{i=1}^m M_i$.\\[-1.5ex]
		{\color{black}\item[(I)] \emph{Integrability condition:} 
		\label{A: Finiteness of integral}
		There exists $y_0 \in R_\ell$ such that  we have 
		\[
			\int_{\mathbb{R}^d} \|x-y_0\|^p \exp(-\ell(x)) \mu_0(\textup{d}x)<\infty.
		\]\\[-1.5ex]}
		{\color{black}\item[(T)] \emph{Tail condition:} 
		\label{A: Bounded away from zero}
		There exists an $\varepsilon > 0$ such that for all $i = 1, \ldots, m$ the tubular neighbourhoods $N_i(\varepsilon)$ (corresponding to $M_i$) are pairwise disjoint and for all $\delta \in (0, \varepsilon]$ holds
		\[
		\inf \Big\{\ell(x)  \mid x \in \mathbb{R}^d\setminus \bigcup_{i=1}^m N_i(\delta) \Big\} > 0.
		\]
		\\[-1.5ex]}
		\item[(S)]\emph{Smoothness condition:} 
				\label{A: Regularity of f}
		On a set $F\subseteq \mathbb{R}^d$ the function $\ell$ is 
		$2p + 2$-times continuously differentiable\footnote{All partial derivatives up to order $2p+2$ exist and are continuous.} and the tubular neighbourhoods $N_i(\varepsilon)$ are contained in the interior of $F$ for all $i = 1, \ldots, m$.\\[-1.5ex]
		\item[(P)] \emph{Positive definiteness condition:}
		For $i=1,\dots,m$ and $t=(t_1,\dots,t_{d-k_i})$ the Hessian $\displaystyle \frac{\partial^2 (
			\ell\circ S_i)}{\partial^2 t}(0,u)$ \textcolor{black}{of $t \mapsto (\ell \circ S_i)(t,u)$} is positive definite for all $u \in M_i$, where $S_i$ is the diffemorphism corresponding to $M_i$, see Section~\ref{Sec: preliminaries manifolds}.
	\end{itemize}
\end{ass}
Let us comment on the assumptions imposed on $\ell$. 
\begin{rem}
	\color{black}
	Condition~(I) implies for any $n\in\mathbb{N}$ that  
	\[  
	\int_{\mathbb{R}^d} \|x-y_0\|^p \exp(-n \,\ell(x)) \mu_0(\textup{d}x) \leq \int_{\mathbb{R}^d} \|x-y_0\|^p \exp(-\ell(x)) \mu_0(\textup{d}x)<\infty,
	\]	
which yields to
	\begin{align*}  
	Z_n &= \int_{\mathbb{R}^d} \exp(-n \,\ell(x)) \mu_0(\textup{d}x)\\
	&\leq \int_{\mathbb{R}^d \setminus B_1^{d}(y_0)} \|x-y_0\|^p \exp(-n \, \ell(x)) \mu_0(\textup{d}x)
	+ \int_{B_1^{d}(y_0)} \exp(-n \, \ell(x)) \mu_0(\textup{d}x) <\infty.
	\end{align*}
	This ensures the well-definedness of $\mu_n$ as well as the finiteness of Wasserstein distances of $\mu_n$ and any probability measure that satisfies a (similar) moment condition.
\end{rem}

\begin{rem}\label{R: Condition that implies (T)}
\color{black}
 By condition (M) we have that 
 $ R_\ell $ consists of finitely many pairwise disjoint compact sets and by the fact that the norm is a continuous function, the tail condition (T) is implied by asking for all $ \delta >0 $ that
 		\[
 		\inf \Big\{\ell(x)  \mid x \in \mathbb{R}^d\setminus \bigcup_{i=1}^m N_i(\delta) \Big\} > 0.
 		\]
 In general, the tail condition (T) prevents that there exists a sequence  $(x_k)_{k\in\mathbb{N}}$ of local minima
 of $\ell$ outside of 
 the union of the manifolds $M_i$
 such that $\ell(x_k) \to \min_{x\in\mathbb{R}^d} \ell(x)$ as $k\to \infty$. 
 In particular, this yields that the tails of $\mu_n$ become negligible compared to $\mu_n(N_j(\varepsilon))$ for $j=1,\ldots,m$ as $n\to \infty$.
\end{rem}

\begin{rem}
	Note that from the conditions (M) and 
	(T) we can conclude that $R_\ell$ is the set of all global minimal points. Moreover, condition (M) guarantees that this set consists of sufficiently well-behaved manifolds 
	such that the intrinsic measure and the tubular neighbourhood from Section~\ref{Sec: preliminaries manifolds} exist.
	The conditions (S), (T) and (P) are (also) required to apply Laplace's method, 
	for details see
	Theorem~\ref{Thm: Parametrized Laplace method} in Section~\ref{Sec: Proof of psi-zeta-convergence lemma}.
\end{rem}
\begin{rem}
	Note that in \eqref{eq: prior} we assumed for simplicity that the support of $\pi_0$ is the whole $\mathbb{R}^d$. 
	This is actually not required as long as 
	all minimal points of $\ell$ belonging to the support of $\pi_0$ lie within its interior.
\end{rem}	
Now we are able to formulate our Wasserstein distance estimates. 

\begin{theorem}\label{Thm: Manifolds more general version}
	Let $(\mu_n)_{n\in\mathbb{N}}$ be specified by \eqref{Eq: Definition of phi_n} with $\ell\colon \mathbb{R}^d\to [0,\infty)$ satisfying Assumption~\ref{ass: reg_to_ell}
	for a given $p \in \mathbb{N}$. With this, let
	$k :=  \max_{j = 1, \ldots, m}k_j $ and $R' := \bigcup_{i=1,\;\dim M_i = k}^m M_i$.
	Define $\phi\colon R' \to [0,\infty)$ by
	\begin{equation}
	\label{eq:phi}
	\phi (u) := \frac{ \sum_{i=1,\;M_i \subseteq R'}^m\; \pi_0 (S_i(0,u)) \det\left(\frac{\partial^2 (\ell \circ S_i)}{\partial^2 t}(0,u)\right)^{-1/2} 
		\mathbbm{1}_{M_i}(u)}
	{\sum_{i=1,\;M_i \subseteq R'}^m \int_{M_i} \pi_0(S_i(0,v))\det\left(\frac{\partial^2 (\ell \circ S_i)}{\partial^2 t}(0,v)\right)^{-1/2} \, \mathcal{M}_i(\textup{d}v)},
	\end{equation}
	and the measure $\mathcal{M}\colon \mathcal{B}(R')\to [0,\infty)$ by
	\[
	\mathcal{M}(\cdot) := \sum_{i=1,\;M_i \subseteq R'}^m \mathcal{M}_i( \cdot \cap M_i).
	\]
	Then, for the probability measure $\mu\colon \mathcal{B}(R') \to [0,1]$ given by
	\begin{equation}
	\label{eq:mu}
	\mu(\d u) := \phi(u) \, \mathcal{M}(\d u)
	\end{equation}
	there exists a constant $K\in(0,\infty)$ such that 
	\[
	W^p(\mu_n, \mu) \leq K \, n^{-1/(2p)}, \quad \forall n\in\mathbb{N}.
	\]
\end{theorem}

The proof is postponed to Section~\ref{Sec: proof_gen_thm}, but in the following remark we briefly comment on the strategy how to obtain the result. 
\begin{rem}  \label{rem: strategy_of_proof}
	For any $n\in\mathbb{N}$ define a sequence of (auxiliary) probability measure $(\nu_n)_{n\in\mathbb{N}}$ by setting
	\begin{align}  \label{al: nu_n}
	\nu_n(\d u) &:=\frac{1}{\sum_{j=1}^m \mu_n(N_j(\varepsilon)) } \sum_{i=1}^m \frac{\zeta^{(n)}_{i}(u)}{Z_n} \cdot \mathcal{M}_i(\d u),
	\end{align}
	where the function 
	\begin{equation}
	\label{eq: zeta_i}
	\zeta^{(n)}_i(u) = \int_{B_\varepsilon^{d-k_i}(0)}   \exp\big(-n\ell(S_i(t,u))\big) \pi_0\big(S_i(t,u)\big) h_i(t,u)
	\, \lambda_{d-k_i}(\textup{d}t), \quad u\in M_i,
	\end{equation}
	is given as in \eqref{Eq: Definition of zeta}.
	The probability measure $\nu_n$ is an approximation of the limit distribution $\mu$ which respects the structure introduced by the tubular neighbourhoods of $R_\ell$:
	\textcolor{black}{intuitively}, a tubular neighbourhood associates to each point of the enclosed manifold a `slice' of the surrounding $\mathbb{R}^d$.
	Therefore, $\nu_n$ can be interpreted as a projection of $\mu_n$, along this `slice'-structure, 
	onto the manifolds of minimal points.
	
	The idea is now to use the triangle inequality
	\[
	W^p(\mu_n,\mu) \leq W^p(\mu_n,\nu_n) + W^p(\nu_n,\mu),
	\]
	and to construct  \textcolor{black}{a suitable coupling from $C(\mu_n,\nu_n)$ to estimate $ W^p(\mu_n,\nu_n)$ and to bound $W^p(\nu_n,\mu)$ 
	by using Lemma~\ref{L: Coupling lemma for intermediate step} that exploits a total variation distance assessment.}
	Afterwards the resulting integrals get further estimated
	by using an adapted Laplace method, see also 
	\cite[Theorem 3.1]{Hwang}. 
\end{rem}


The order of convergence in Theorem \ref{Thm: Manifolds more general version} depends on 
the order
$p$ of the Wasserstein distance.
If we introduce additional assumptions, we get a rate of convergence that is independent of $p$.

\begin{theorem}\label{Thm: Manifolds convergence from below}
	Let $(\mu_n)_{n\in\mathbb{N}}$ be specified by \eqref{Eq: Definition of phi_n} with $\ell\colon \mathbb{R}^d\to [0,\infty)$ satisfying Assumption~\ref{ass: reg_to_ell} for a $p \in \mathbb{N}$. 
	For the manifolds $M_1,\dots,M_m$ assume that $\dim M_i = k$ for all $i\in\{1,\dots,m\}$.
	Define $\phi$ and $\mu$ as in \eqref{eq:phi} and \eqref{eq:mu}, respectively, and assume that
	\begin{equation}\label{Eq: Assumption convergence happens from below}
	\tag{CB}
	\phi(u) - \sum_{i=1}^m Z_n^{-1} \zeta_{i}^{(n)}(u) \mathbbm{1}_{M_i}(u) \geq 0, \quad \forall u\in R_\ell,\, n\in\mathbb{N},
	\end{equation}
	where the function $\zeta_{i}^{(n)}$ on $M_i$ is given as in \eqref{eq: zeta_i}.
	Then, there is a constant $K\in(0,\infty)$ such that
	\[
	W^p(\mu_n,\mu) \leq K n^{-1/2}, \qquad \forall n \in \mathbb{N}.
	\]
	
\end{theorem}


{\color{black}The proof of the result is postponed to Section~\ref{Sec: proof_est_spec}. 
We start with providing an intuition for the abbreviation \eqref{Eq: Assumption convergence happens from below}:
\begin{rem}
Compared to Theorem~\ref{Thm: Manifolds more general version} the crucial additional assumption of the former result
enforces a certain `convergence from below' of densities and is therefore abbreviated as \eqref{Eq: Assumption convergence happens from below}:
 Observe that in the previous theorem all requirements of Theorem~\ref{Thm: Manifolds more general version} are satisfied. Without taking \eqref{Eq: Assumption convergence happens from below} into account this already yields
\begin{equation}
\label{eq: limit}
\lim_{n \to \infty}  Z_n^{-1}\,\zeta_{i}^{(n)}(u) = \phi(u),
\end{equation}
for all $u \in M_i$ with $M_i \subseteq R'$ (see Lemma \ref{L: Convergence psi, zeta} below).     
Now \eqref{Eq: Assumption convergence happens from below} ensures that the convergence in \eqref{eq: limit} takes place from below.		
\end{rem}
An illuminating discussion about a special case follows:
\begin{rem}\label{R: Finitely many minima}
	We discuss Theorem~\ref{Thm: Manifolds more general version} and Theorem~\ref{Thm: Manifolds convergence from below} in the scenario where $\ell$ satisfies Assumption~\ref{ass: reg_to_ell} and has finitely many global minimal points, i.e., $R_\ell$ is a finite set. 
	Note that our setting covers such $\ell$, 
	since a single point in $\mathbb{R}^d$ can be interpreted as a $0$-dimensional manifold which always satisfies Assumption~\ref{A: Assumption on manifolds}.
	Moreover, the statements of the former theorems and Assumption~\ref{ass: reg_to_ell} simplify further in certain aspects. 
	The tubular neighbourhoods that satisfy (T)
	and (S)
	become balls of radius $\varepsilon>0$ centered at the points in $R_\ell$,
	and for (P) it is sufficient to ask for a positive definite Hessian matrix $H_\ell$ of $\ell$ evaluated at the global minimal points. 
	Note that the intrinsic measure of a singleton manifold $
	\{x\}$, with $x\in R_\ell$, is the Dirac measure $\delta_x$. The function $\phi$, as given in \eqref{eq:phi}, is defined on $R_\ell$ and therefore simplifies to the definition of weights. We set 
	\begin{equation}\label{Eq: Weights finitely many minima}
	w_x := \phi(x) = \frac{\det\left (H_\ell(x)\right)^{-1/2}\pi_0(x)}{\sum_{y \in R_\ell} \det\left (H_\ell(y)\right)^{-1/2}\pi_0(y)}, \qquad x\in R_\ell.
	\end{equation}
	Hence, the limit measure $\mu$ on $R_\ell$ is discrete and can be represented as
	\begin{equation}\label{Eq: Limit measure finitely many minima}
	\mu = \sum_{x \in R_\ell} w_x \delta_x.	
	\end{equation}
	Moreover, the assumption \eqref{Eq: Assumption convergence happens from below} of Theorem~\ref{Thm: Manifolds convergence from below} reduces to the existence of some $\varepsilon >0$ such that 
	\begin{equation}
	\tag{CB1}
	\label{eq: sqrtn_add_ass_finite_case}
	\mu_n\left(B_\varepsilon^d(x)\right) \leq w_x,
	\end{equation}
	for all $x \in R_\ell$ and all $n \in \mathbb{N}$.
	If the global minimal point $x$ of $\ell$ is unique, its weight simplifies to $w_x = 1$. 
	In 
	this case
	\eqref{eq: sqrtn_add_ass_finite_case} is always trivially satisfied.
	However, when $R_\ell$ is not a singleton, then \eqref{Eq: Assumption convergence happens from below} is not redundant.
	The fact, that in some form \eqref{Eq: Assumption convergence happens from below} is already required in such a simple setting also motivates further discussions.
\end{rem}
}

\subsection{\color{black}On the `convergence from below' assumption \eqref{Eq: Assumption convergence happens from below} of Theorem \ref{Thm: Manifolds convergence from below}}\label{Sec: Diskussion of CB-condition}

{\color{black}We provide our intuition about the additional requirement and state a sufficient condition.

 Intuitively, for $\mu_n$ the assumption \eqref{Eq: Assumption convergence happens from below} guarantees that  there is no more mass in the vicinity of a minimal point of $ \ell $ than this minimal point can attract in the limit.
In particular, this is satisfied if we have a certain uniformity of modes.
\begin{lemma}\label{L: Uniformity of modes}
	Without imposing \eqref{Eq: Assumption convergence happens from below} let all other assumptions of Theorem \ref{Thm: Manifolds convergence from below} be satisfied.
	Additionally, assume 
	there is a $ v \in R_\ell $ and an $ \widetilde{\varepsilon}>0 $ such that
	for all $ u \in R_\ell $ there exist invertible matrices $ A_u \in \mathbb{R}^{(d-k)\times(d-k)}$ that depend continuously on $u$, such that
	\begin{align}\label{Eq: Affinity conditions}
	\tag{CB2}
	\begin{split}
	\ell (S_i(t,u)) &= \ell(S_{j^*}(A_ut,v)),\\
	\pi_0(S_i(t,u)) &= \pi_0(S_{j^*}(A_ut, v)), \\
	h_i(t,u)&= h_{j^*}(A_ut,v),\qquad \qquad \forall u \in R_\ell,\; t \in A_u^{-1}\big(B_{\widetilde{\varepsilon}}^{d-k}(0)\big),
	\end{split}
	\end{align}
	where $ j^* $ is the index which satisfies $ v \in M_{j^*} $. 
	Then, condition \eqref{Eq: Assumption convergence happens from below} holds.
\end{lemma}
The proof of Lemma \ref{L: Uniformity of modes} is stated in Section \ref{Sec: Proof for uniformity of modes}. The requirement \eqref{Eq: Affinity conditions} can be interpreted as uniformity of the modes of $ \mu_n $ up to linear transformations. 
\begin{rem}\label{R: Uniformity of modes - Simpler formulations for special cases}
		We now describe \eqref{Eq: Affinity conditions} more explicitly in two different scenarios.
	\begin{enumerate}
		\item[(a)] If $ \ell $ has only finitely many minimal points, i.e., $ R_\ell = \{x_1, \ldots, x_m\} $, 
		then \eqref{Eq: Affinity conditions} means that there exists a minimal point, say $ x_1 $, 
		such that on a neighbourhood around each of the other minimal points, $ \ell $ and $ \pi_0 $ are affine linear transformations 
		of $ \ell $ and $ \pi_0 $ at $ x_1 $.
		More precisely
		\begin{equation}
		\tag{CB3}
		\label{Eq: Affinity condition for finitely many minima}
		\begin{split}
		\ell\vert_{A_{x_i}^{-1}\left(B_{\widetilde{\varepsilon}}^{d-k}(0)\right)+x_i}(\cdot) &= \ell\big(A_{x_i}(\cdot-x_i) + x_1\big), \\
		\pi_0\vert_{A_{x_i}^{-1}\left(B_{\widetilde{\varepsilon}}^{d-k}(0)\right)+x_i}(\cdot) &= \pi_0\big(A_{x_i}(\cdot-x_i) + x_1\big), \qquad \forall i \in \{2, \ldots,m\}.
		\end{split}
		\end{equation}
		\item[(b)] If, for any $ u \in R_\ell $, the matrix $A_u \in \mathbb{R}^{(d-k)\times(d-k)}$ coincides with the identity, 
		then \eqref{Eq: Affinity conditions} means that $ \ell \circ \left(\sum_{i=1}^{m} \mathbbm{1}_{M_i} S_i\right) $, $ \pi_0 \circ \left(\sum_{i=1}^{m} \mathbbm{1}_{M_i} S_i\right) $ and $ \left(\sum_{i=1}^{m} \mathbbm{1}_{M_i} h_i\right) $ are constant in $ u \in R_\ell $.
	\end{enumerate}
\end{rem}
\textcolor{black}{
Now we consider an example in the setting of Theorem~\ref{Thm: Manifolds more general version}, where a numerically assessment indicates  
that \eqref{Eq: Assumption convergence happens from below} is not satisfied. 
The experiments also suggest that the Wasserstein distances $W_p(\mu_n,\mu)$ decay like $n^{-r(p)}$ where in this case $1/p \geq r(p) \geq 1/(2p)$.
This indicates that \eqref{Eq: Assumption convergence happens from below} is required to ensure the $p$-independent convergence rate of $1/2$ within Theorem~\ref{Thm: Manifolds convergence from below}
}
\begin{example}\label{Ex: Slower convergence rate}
	\emph{(\textcolor{black}{Gaussian tails and two peaks}.)}
	Let $ \mu_0 = \lambda_1 $ be the $1$-dimensional Lebesgue measure and 
	consider the function
	\begin{align*}
	\ell: \mathbb{R} &\to [0, \infty),\qquad
	x \mapsto \mathbbm{1}_{\mathbb{R} \setminus [-1,1]}(x) (x-2)^2 + \mathbbm{1}_{[-1,1]} x^2 \exp(x),
	\end{align*}
	that determines $\mu_n$.
Observe that
	$
	R_\ell = \{0,2\}.
	$
	We verify that $ \ell $ satisfies the requirements of Theorem~\ref{Thm: Manifolds more general version} for finitely many minima as described in Remark~\ref{R: Finitely many minima}. 
	Since all moments of the normal distribution exist and $ \ell(x) = (x-2)^2 $ outside the interval $ [-1,1] $, condition (I) is satisfied.
	To verify (T), we choose an arbitrary $ \varepsilon \in (0,1) $, and note that it is sufficient to observe that $\ell$ is strictly monotonically decreasing on the sets $ (-\infty,2]\setminus [-1,1] $ and $ [-1,0] $,
	and strictly monotonically increasing on the sets $ [2,\infty) $ and $ [0,1] $.
	Clearly, also (S) is satisfied for the choice of $\varepsilon$.
	In particular, for second derivatives on the `balls' of radius $\varepsilon$ around the two minimal points we have
	\begin{align*}
	\ell''(x) &= \exp(x) \cdot \left(x^2 + 4x + 2\right),\quad |x| <\varepsilon,\\
	\ell''(x) &= 2 ,\quad |x-2|<\varepsilon.
	\end{align*} 
	Consequently $  
	H_\ell(0) = H_\ell(2) = 2  > 0,
	$
	such that (P) holds and we may apply Theorem \ref{Thm: Manifolds more general version}.
	The equality of the Hessian matrices at the two minimal points induces equal weights $ w_0 =w_2=\frac{1}{2} $, such that (by Remark~\ref{R: Finitely many minima}) the limit measure takes the form
	\[  
	\mu = \frac{1}{2} \delta_0 + \frac{1}{2} \delta_2.
	\]
	We are interested whether condition \eqref{Eq: Assumption convergence happens from below} is satisfied. Here it simplifies to \eqref{eq: sqrtn_add_ass_finite_case}, since we are in a finitely many minima case. For $z\in R_\ell = \{0,2\}$ consider  
	\[
	\varepsilon_*(n,z) = \sup\{ \varepsilon > 0 \mid \mu_n(B_\varepsilon^1(z)) \leq w_z \}, \qquad n \in \mathbb{N},
	\]
	and note that if \eqref{eq: sqrtn_add_ass_finite_case} holds, then
	$\lim_{n\to \infty} \varepsilon_*(n,z) >0$ for any $z\in\{0,2\}$.
	By approximating\footnote{For an initial $\varepsilon>0$ and fixed $ n \in \mathbb{N} $ we numerically evaluate $\mu_n(B_\varepsilon^1(z))$. If the resulting value is larger than $ w_z  = 1/2 $, we shrink $ \varepsilon $ by a factor of $8/10 $ and evaluate  $\mu_n(B_\varepsilon^1(z))  $  for the shrinked  $ \varepsilon $ until we find the first value $  \widehat{\varepsilon}_*=\widehat{\varepsilon}_*(n,z) $ such that $ \mu_n(B_{\widehat{\varepsilon}_*}^1(z)) \leq 1/2 $.
		For a sufficiently large initial $\varepsilon>0$ and under the assumption of exact integral computation we have by construction that $	\varepsilon_*(n,z)\in [\widehat{\varepsilon}_*(n,z),10/8\, \widehat{\varepsilon}_*(n,z)]$.} $\varepsilon_*(n,z)$ with $\widehat{\varepsilon}_*(n,z)$ we assess numerically whether the former limit behaviour is true. 
	Figure~\ref{fig: numerics1} (a) heavily supports the conjecture that
	\[
	\lim_{n \to \infty} {\varepsilon}_*(n,0) = 0 
	\quad \text{and} \quad
	\lim_{n \to \infty} {\varepsilon}_*(n,2) = 1,
	\]
	such that \eqref{eq: sqrtn_add_ass_finite_case} (and by equivalence \eqref{Eq: Assumption convergence happens from below}) cannot be satisfied.
\begin{figure}[htb]
	\centering
	\subfloat[][]{\includegraphics[width=0.475\linewidth]{
			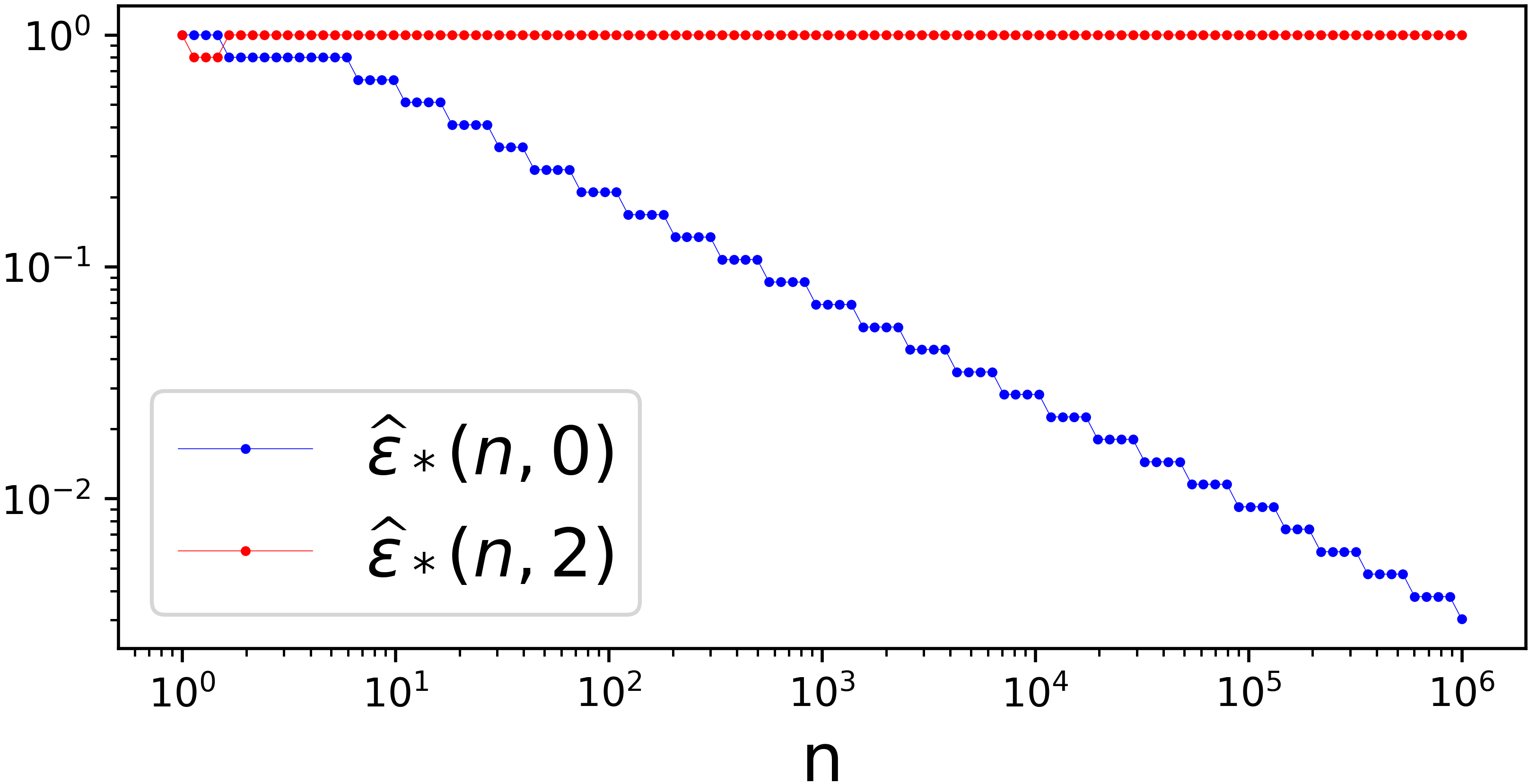}}
		\qquad
	\subfloat[][]{\includegraphics[width=0.475\linewidth]{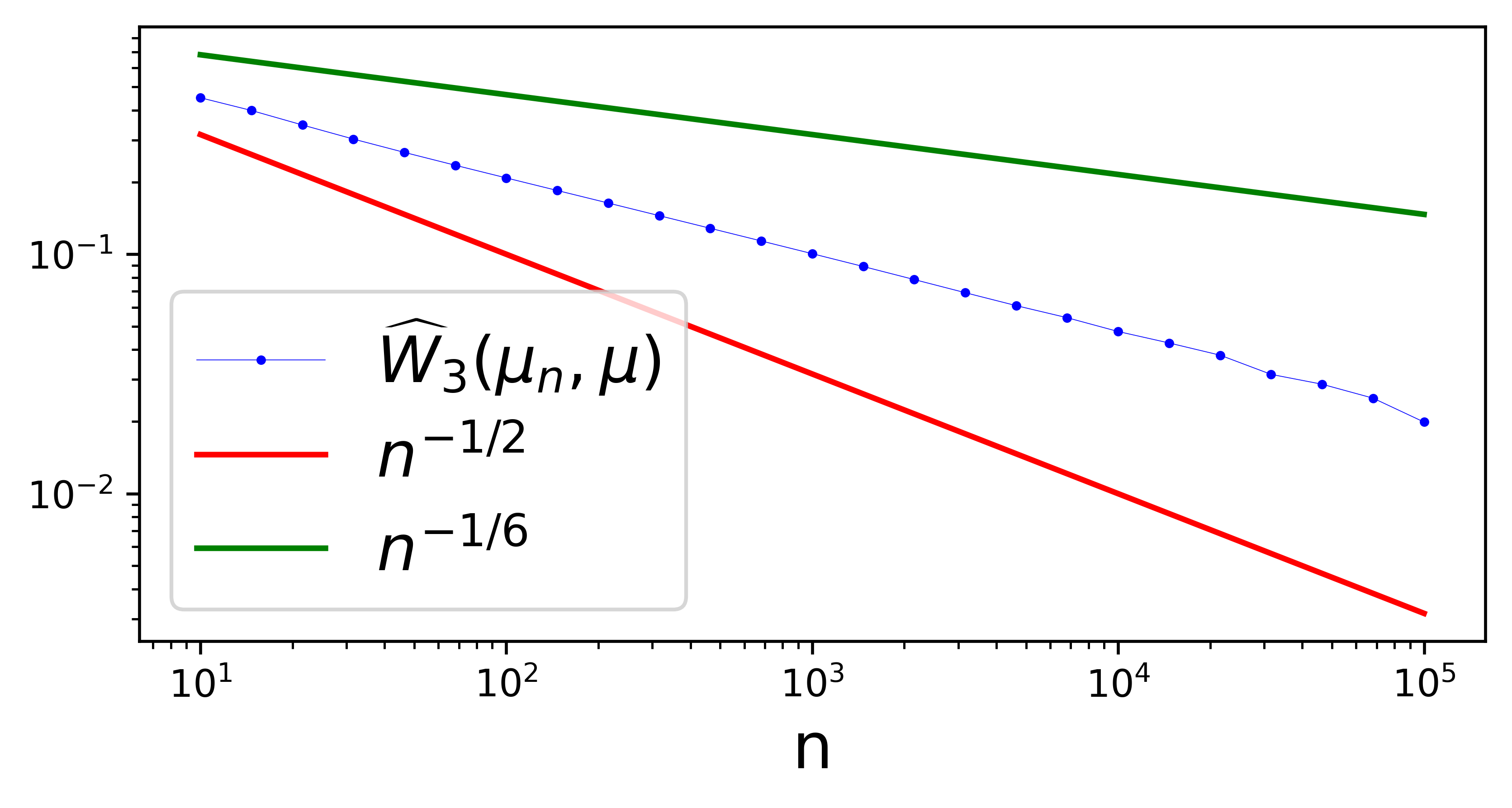}}%
			\caption{
		In (a) the proxy $n\mapsto \widehat{\varepsilon}_*(n,0)$ of $\varepsilon_*(n,0)$ is plotted in blue, whereas the proxy $n\mapsto \widehat{\varepsilon}_*(n,2)$ of $\varepsilon_*(n,2)$ is plotted in red. It suggests that $\lim_{n \to \infty} \varepsilon_*(n,0) = 0$, such that \eqref{Eq: Assumption convergence happens from below} is not satisfied. In (b) for $p=3$ the mapping $n\mapsto n^{-1/(2p)}$ is plotted in green, the mapping $n\mapsto n^{-1/2}$ is given by the red curve and the proxy $n \mapsto \widehat{W}_3(\mu_n,\mu)$, which behaves as $n\mapsto n^{-1/3}$, is presented in blue. 
	}
	\label{fig: numerics1}
\end{figure}

	Now we numerically assess $W^p(\mu_n,\mu)$. 
By \cite[Theorem~2.15]{Pflug_Pichler}, for $p\geq 1$ we have
\[
W^p(\mu_n,\mu) = \left( \int_0^1 \vert F_{\mu_n}^{-1}(r) - F_{\mu}^{-1}(r)  \vert^p {\rm d}r  \right)^{1/p},	\]
where $F^{-1}_{\mu_n}$, $F_{\mu}^{-1}$ denote the generalized inverses of the cumulative distribution functions $F_{\mu_n}, F_\mu$ of the probability measures $\mu_n,\mu$, respectively. 
By approximating $F_{\mu_n}^{-1}$ we assess a proxy $\widehat{W}^p(\mu_n,\mu)$ of $W^p(\mu_n,\mu)$ and plot $n\mapsto \widehat{W}^p(\mu_n,\mu)$ for fixed $p\geq1$. 
In Figure~\ref{fig: numerics1} (b) for $p=3$ and Figure~\ref{fig: numerics2} (a) for $p=18$ with $n\in\{1,\ldots, 10^5\} $ on a log-log-scale the corresponding mappings are presented. Let us mention that for other values of $p\geq 1$ we obtained qualitatively similar curves. The plots indicate that 
$W_p(\mu_n,\mu)$ behaves as $n^{-r(p)}$, with $r(p)$ being a $p$-dependent rate of convergence.
To take a closer look on this dependence we fit a regression model to approximate $r(p)$ by $\widehat{r}(p)$ based on numerically computed values of
$(p,n)\mapsto \widehat{W}_p(\mu_n,\mu)$, see Figure~\ref{fig: numerics2} (b). The visualization suggests that $p\leq r(p)^{-1} \leq 2p$ for all $p\geq1$ which clearly supports the aforementioned observation of the $p$-dependence.
\begin{figure}[htb]
	\centering
		\subfloat[][]{\includegraphics[width=0.475\linewidth]{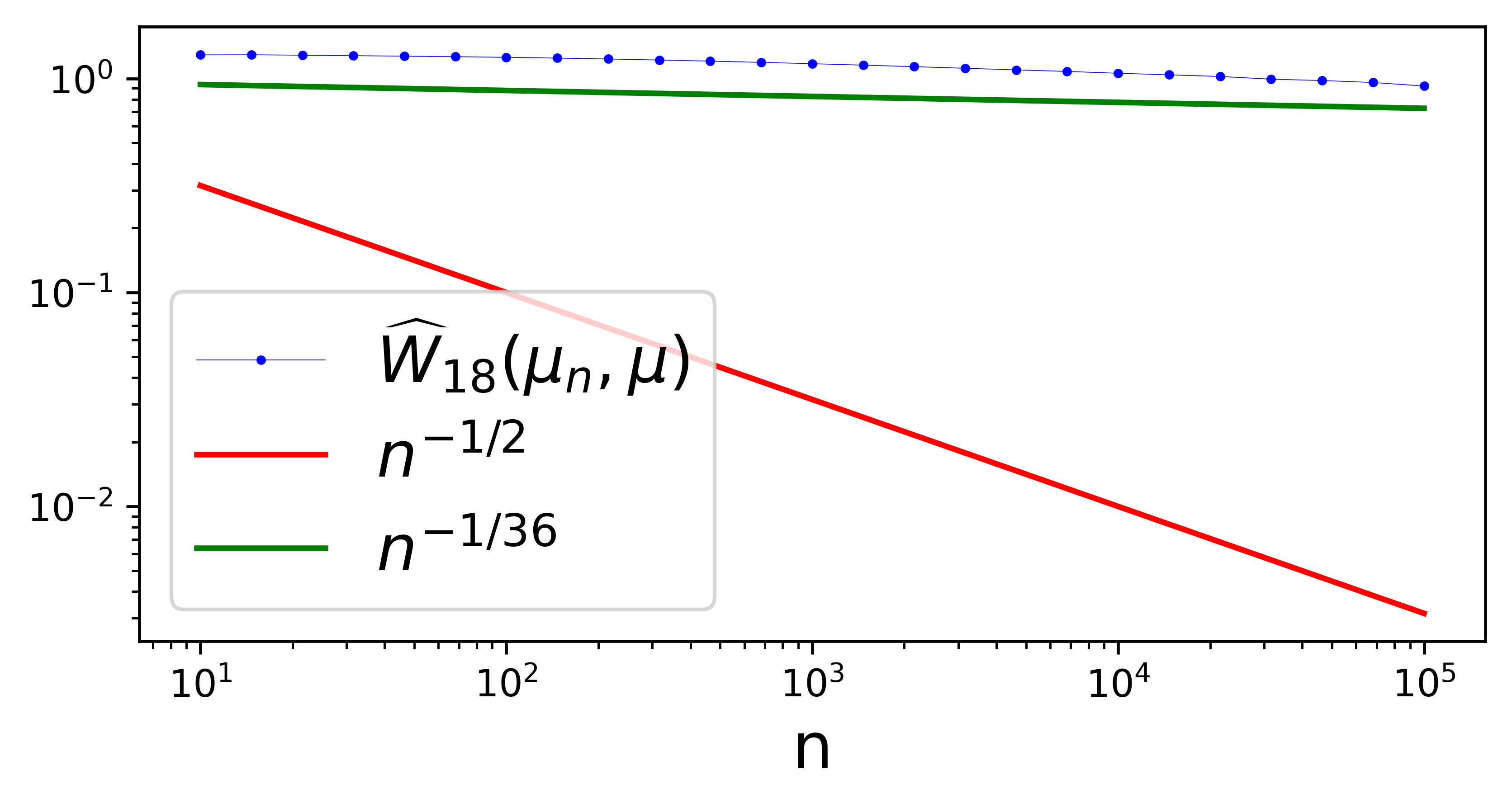}}%
		\qquad
		\subfloat[][]{\includegraphics[width=0.475\linewidth]{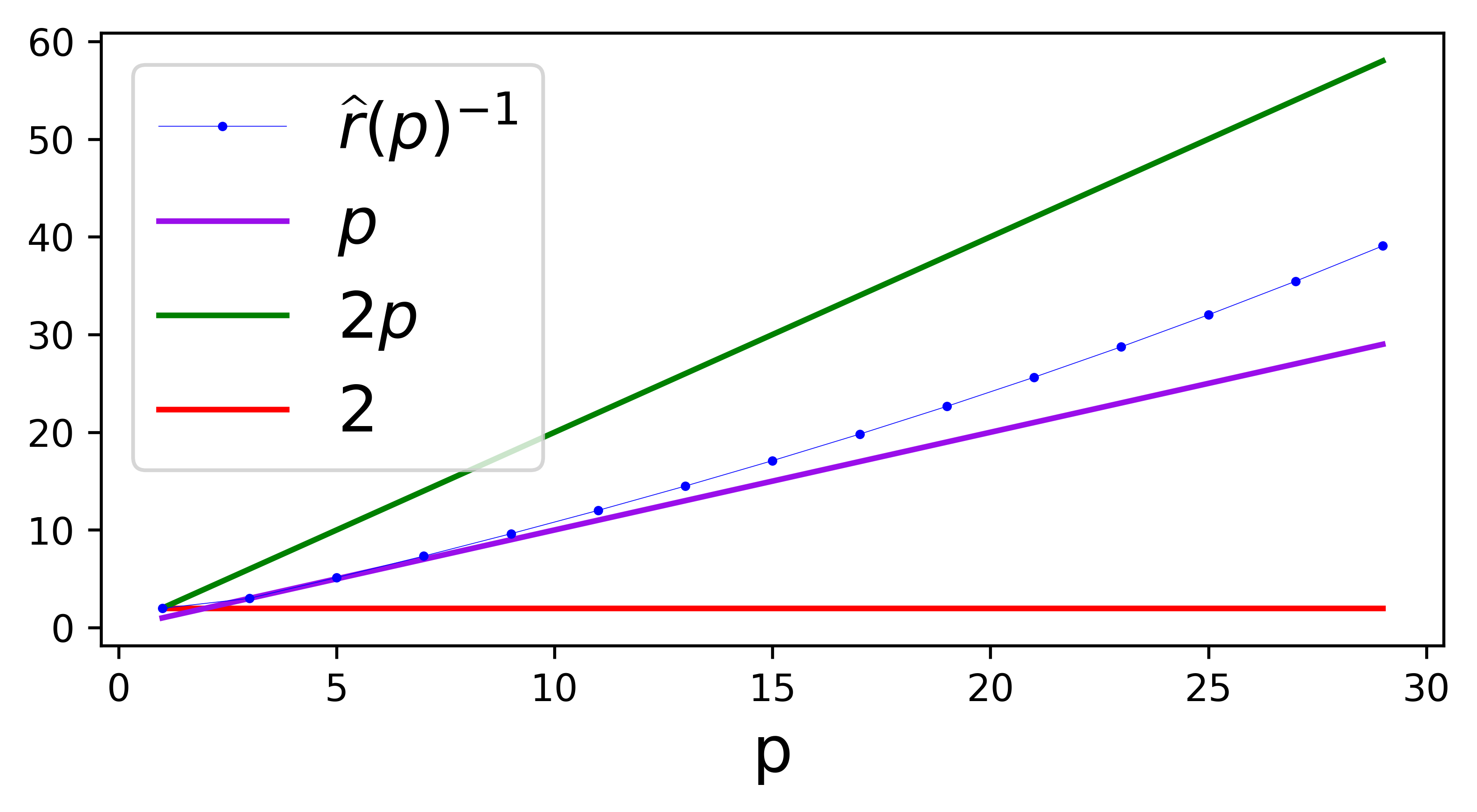}}
\caption{\color{black}
	In (a) for $p=18$ the mapping $n\mapsto n^{-1/(2p)}$ is plotted in green, the mapping $n\mapsto n^{-1/2}$ is given by the red curve and the proxy $n \mapsto \widehat{W}_{18}(\mu_n,\mu)$, which behaves as $n\mapsto n^{-1/21}$, is presented in blue. 
	In (b) the red curve is given by $p \mapsto 2$ (that corresponds to $n^{-1/2}$) as well as in green $p\mapsto 2p$ and in purple $p\mapsto p$ is plotted. The blue points present the proxies $\widehat{r}(p)$ of the convergence rate $r(p)$ depending on $p$. It suggest that $p\leq r(p) \leq 2p$ for all $p\geq 1$.   
	}%
	\label{fig: numerics2}
\end{figure}
\end{example}
}
{\color{black}
The previous example provides evidence that \eqref{Eq: Assumption convergence happens from below} 
is indeed required to obtain a $p$-independent convergence as stated in
Theorem~\ref{Thm: Manifolds convergence from below}. 
Therefore it seems appropriate to comment on the 
different strategies of proofs of both theorems.

In the proof of Theorem \ref{Thm: Manifolds more general version} we are only able to show that $W^p(\nu_n,\mu) \leq K n^{-1/(2p)}$ for some $K\in (0,\infty)$,
but $W^p(\mu_n, \nu_n ) \leq K' n^{-1/2}$ for some $K'\in (0,\infty)$,
i.e., the $p$-dependence within the order of convergence there comes from the use of the intermediate probability measures $(\nu_n)_{n\in\mathbb{N}}$.	
The additional assumption \eqref{Eq: Assumption convergence happens from below} in Theorem \ref{Thm: Manifolds convergence from below} allows us to directly construct a `good' coupling in $C(\mu,\mu_n)$,
which eventually leads to a convergence rate independent of $p$.}

\subsection{\color{black}Illustrative scenarios}\label{Sec: Examples}
{\color{black}We illustrate the applicability of the previous theorems at three examples. 
More specifically, we discuss the case of a $ 1 $-dimensional normal distribution, which establishes the optimality of the convergence rate in Theorem \ref{Thm: Manifolds convergence from below}.
We also show how our results can be used to derive convergence rates for suitable Gaussian (mixture) approximations of $\mu$ in the case of finitely many minima of $\ell$. 
Finally, we verify Assumption~\ref{ass: reg_to_ell} in a continuous manifold framework determined by volcano densities. }


\begin{example}\label{Ex: 1-d normal distribution}
	{\color{black}
	\emph{(Optimality of order of convergence.)}
	For $d=1$ let us consider $\mu_n = \mathcal{N}(0,n^{-1})$, i.e., $\mu_n$ is the $1$-dimensional normal distribution with mean zero and variance $n^{-1}$ for $n \in \mathbb{N}$.
	Thus, $\ell(x) = x^2/2$ and $\mu_0$ is the $1$-dimensional Lebesgue measure. 
	Taking Remark \ref{R: Finitely many minima} into account, it is easily seen that the assumptions of Theorem~\ref{Thm: Manifolds convergence from below} (with $m=1$ and $R_\ell=\{0\}$) are satisfied 
	for all $p \in \mathbb{N}$
	and $\mu=\delta_0$.

	In this particular situation, we observe that the upper bound of Theorem~\ref{Thm: Manifolds convergence from below} cannot be improved in general, since one can easily compute $W^p(\mu_n,\delta_0)$ for even $p$ explicitly. For this note that 
	$C(\mu_n,\delta_0)=\{\mu_n\otimes \delta_0\}$.
	%
	%
	%
	%
	Let $Z \sim \mathcal{N}(0,n^{-1})$ be a normally distributed real-valued random variable with mean $0$ and covariance $n^{-1}$.
	Then, the $p$th moment of $Z$ for even $p$ is given by $\frac{p!}{2^{p/2} (p/2)!}\left(\frac{1}{\sqrt{n}}\right)^p$ and
	therefore
	\begin{align*}
	W^p(\mu_n, \delta_0) &= \left(\int_{\mathbb{R}^2} |x-y|^p \, (\mu_n \otimes \delta_0)(\textup{d}x, \textup{d}y) \right)^{1/p}
	=\left( \mathbb{E}\left(Z^p\right) \right)^{1/p}\\
	&= \left(\frac{p!}{2^{p/2} (p/2)!} \right)^{1/p}\frac{1}{\sqrt{n}},
	\end{align*}
	which shows that the convergence rate of $1/2$ from Theorem~\ref{Thm: Manifolds convergence from below} cannot be improved.}
\end{example}

\begin{example}\emph{(Gaussian approximation.)}
	Suppose that $\ell$ satisfies Assumption~\ref{ass: reg_to_ell} and for the moment assume that $R_\ell = \{x^* \}$, i.e., there is only one global minimal point of $\ell$. In this setting a \emph{Gaussian approximation} of $\mu_n$ can be defined as 
	\begin{equation*}
	\mathcal{G}_{\mu_n} := \mathcal{N}\left( x^*, n^{-1} H_\ell(x^*)^{-1} \right),
	\end{equation*}
	that is, a multivariate normal distribution with mean $x^*$ and covariance matrix $n^{-1} H_\ell(x^*)^{-1}$, where $H_\ell(x^*)$ denotes the Hessian of $\ell$.
	By the triangle inequality of the $p$-Wasserstein distance we obtain
	\[
	W^p(\mu_n,\mathcal{G}_{\mu_n}) \leq W^p(\mu_n,\mu) + W^p(\mu,\mathcal{G}_{\mu_n}),
	\]
	so that by virtue of Remark~\ref{R: Finitely many minima}, in particular \eqref{eq: sqrtn_add_ass_finite_case} being trivially satisfied for both $\mu_n$ and $\mathcal{G}_{\mu_n}$, we have
	\begin{equation}
	\label{eq: Gaussian_proxy}
	W^p(\mu_n,\mathcal{G}_{\mu_n}) \leq K n^{-1/2}, \qquad \forall n \in \mathbb{N},
	\end{equation}
	for some $K\in(0,\infty)$.
	In this sense $\mathcal{G}_{\mu_n}$ serves as a simple proxy for $\mu_n$ that is
	conceptually close to the \emph{Laplace approximation}, which, for $\mu_n$ as in \eqref{Eq: Definition of phi_n}, is given by the Gaussian measure $\mathcal{N}\left( x_n, n^{-1} C_n^{-1} \right)$ where
	\[
	x_n = \argmin_{x\in\mathbb{R}^d} \Big(\ell(x) - \frac 1n \log \pi_0(x)\Big )
	\]
	and $C_n$ denotes the Hessian of $\ell - \frac 1n \log \pi_0$ at $x_n$.
	\textcolor{black}{Convergence guarantees 
		and error estimates of the Laplace approximation in the context of Bayesian inverse problems have been studied at least in \cite{helin2022non,Sprungk} in terms of the total variation distance and Hellinger metric. In particular, estimates as in \eqref{eq: Gaussian_proxy} are fundamental to analyze sampling schemes based on Gaussian approximations, which do not deteriorate for increasing concentration of $\mu_n$, see \cite{RuSp22,Sprungk}.}
	
	Now we consider the case of multiple but finitely many minimal points of $\ell$, i.e., $R_\ell$ is a finite set.
	For simplicity, we assume that for all $x\in R_\ell$ and $n\in\mathbb N$ the measures $\mu_n$ satisfy \eqref{eq: sqrtn_add_ass_finite_case}.
	As before we seek for a suitable ``Gaussian'' approximation $\mathcal{G}_{\mu_n}$ of $\mu_n$ such that \eqref{eq: Gaussian_proxy} holds. 
	Due \textcolor{black}{to Theorem \ref{Thm: Manifolds convergence from below}} and the triangle inequality this is guaranteed if
	\begin{equation} 
	\label{eq: suff_conv}
	W^p(\mathcal{G}_{\mu_n},\mu) \leq K n^{-1/2}, \qquad \forall n \in \mathbb{N},
	\end{equation}
	for some $K\in (0,\infty)$. 
	In the following we suggest two choices for a Gaussian approximation of $\mu_n$ that generalize the case of a single minimal point and indeed satisfy \eqref{eq: suff_conv}, and thus \eqref{eq: Gaussian_proxy}:
	
	\emph{1. Mixture of Gaussians:}
	For $n\in\mathbb{N}$ and $z \in R_\ell$ define
	$
	\nu_z^{(n)} := 	\mathcal{N}\left( z, n^{-1} H_\ell(z)^{-1} \right)
	$
	and set
	\[
	\mathcal{G}_{\mu_n} := \sum_{z \in R_\ell} w_z \ \nu_z^{(n)}, 
	\]
	where the weights $w_z$ for $z\in R_\ell$ are given as in \eqref{Eq: Weights finitely many minima}.
	
	Considering the discussion in Remark \ref{R: Finitely many minima} we obtain that for some $K_z \in (0, \infty)$ Theorem \ref{Thm: Manifolds convergence from below} yields 
	$W^p\left(\nu_z^{(n)}, \delta_z\right) \leq K_z n^{-1/2}$. 
	We define the coupling $\eta_n\in C(\mathcal{G}_{\mu_n},\mu)$ by
	\[
	\eta_n({\rm d}x,{\rm d}y) := \sum_{z \in R_\ell} w_z \left( \nu_z^{(n)}({\rm d}x)\otimes \delta_z({\rm d}y) \right),
	\]
	and by employing the fact that $C(\nu_z^{(n)}, \delta_z) 
	= \{ \nu_z^{(n)} \otimes \delta_z \}$ we have
	\begin{align*}
	\left( W^p(\mathcal{G}_{\mu_n}, \mu) \right)^p  
	&\leq \int_{\mathbb{R}^d \times \mathbb{R}^d} \norm{x -y}^p \eta_n( \d x, \d y)\\
	&= \sum_{z \in R_\ell} w_z \int_{\mathbb{R}^d \times \mathbb{R}^d} \norm{x -y}^p  
	\left(\nu_z^{(n)} \otimes \delta_z\right)( \d x, \d y)\\
	&  = \sum_{z \in R_\ell} w_z W^p\left(\nu_z^{(n)}, \delta_z\right)^p
	\leq \left( \sum_{z \in R_\ell} w_z K_z^p \right) n^{-p/2}.
	\end{align*}

	\emph{2. Maximum of Gaussians:}
	For the second generalization of the Gaussian approximation we take the maximum of the normal density functions 
	that have been used in the convex combination of the first suggestion. 
	For simplicity we assume $\mu_0 = \lambda_d$.
	We define
	\[
	\alpha(x) := \frac{1}{2} \min_{z \in R_\ell}\{(x-z)^T H_\ell(z) (x-z)\}, \qquad x \in \mathbb{R}^d,
	\]	
	and set, with $\widehat{Z}_n$ being the corresponding normalizing constant, 
	\[
	\mathcal{G}_{\mu_n}({\rm d}x) := \frac{1}{\widehat{Z}_n} \exp(-n \alpha(x)) \lambda_d(\d x), \qquad n \in \mathbb{N}.	
	\]
	
	We now check the assumptions of Theorem \ref{Thm: Manifolds convergence from below} as stated in Remark~\ref{R: Finitely many minima}.
	Note that we have $R_\alpha = R_\ell$, and for all $z \in R_\alpha$ holds $\alpha(x) = \frac{1}{2} (x-z)^T H_\ell(z) (x-z)$ on a neighbourhood $U_z$ of $z$.
	This implies $H_\alpha(z) = H_\ell(z)$ for all $z \in R_\alpha$.
	Therefore, conditions (S) and (P) are satisfied. 
	Since $H_\ell(z)$ is positive definite for $z \in R_\ell$, there exists a positive definite matrix $H_\ell(z)^{1/2}$ such that $H_\ell(z)^{1/2} H_\ell(z)^{1/2} = H_\ell(z)$.
	\textcolor{black}{Observe that \eqref{Eq: Affinity condition for finitely many minima} holds for $ A_{z} = H_\ell(\widetilde{z})^{-1/2}H_\ell(z)^{1/2} $, $ z \in R_\ell $, where $ \widetilde{z}\in R_\ell $ is fixed. }
	\textcolor{black}{Therefore \eqref{eq: sqrtn_add_ass_finite_case} is satisfied, see Remark \ref{R: Uniformity of modes - Simpler formulations for special cases}\,(a).}
	Observe that $x\mapsto \frac{1}{2} (x-z)^T H_\ell(z) (x-z)$ fulfils condition (T) for all $z \in R_\ell$.
	Hence, also $\alpha$ satisfies condition (T).
	Finally, note that $ \exp(-n \alpha(x)) \leq \sum_{z \in R_\ell} \exp \left( - \frac{1}{2} n (x-z)^T H_\ell(z) (x-z) \right)$ for all $x \in \mathbb{R}^d$.
	This implies condition (I), such that Theorem~\ref{Thm: Manifolds convergence from below} yields for some $K \in (0, \infty)$
	\[
	W^p\left( \mathcal{G}_{\mu_n}, \mu\right) \leq K n^{-1/2}, \qquad \forall \, n \in \mathbb{N}.
	\]
	
\end{example}


\begin{example}\emph{(Volcano densities.)}
	We consider the function $\ell \colon \mathbb{R}^d \to [0,\infty)$ given by
	$
	\ell(x) \coloneqq \frac{1}{2}\|x\|^2 - \|x\| + \frac{1}{2}
	$
	and set $\mu_0$ to be the Lebesgue measure on $\mathbb{R}^d$.
	We check the conditions (M), (I), (T), (S) and (P) of Assumption~\ref{ass: reg_to_ell} for arbitrary $p \in \mathbb{N}$.
	
	Note that \[R_\ell = \mathbb{S}^{d-1} := \{ x\in\mathbb{R}^d \colon \Vert x\Vert =1\},\] which is a 
	connected
	compact differentiable and orientable $(d-1)$-dimensional Riemannian
	manifold. The normal vector $v^{(1)}(u)=v(u)$ at $u \in \mathbb{S}^{d-1}$, where $u$ is interpreted as a vector in $\mathbb{R}^d$, is given by $u$.
	In particular, this means that the normal vector $v(u)$ is smooth in $u$ and (M) holds with $m=1$.

	We can choose $r > 0$ 
	sufficiently large, such that
	\begin{equation}\label{Eq: Lower bound on ell}
	\ell(x) = \frac{1}{2}\|x\|^2 - \|x\| + \frac{1}{2} 
	=
	\frac 12 \left(\|x\| -1 \right)^2
	\geq \frac{1}{4}\|x\|^2 
	\end{equation}
	for all $x \in \mathbb{R}^d$ with $\|x\| \geq r$. 
	Hence, condition (I) is satisfied for any finite $p \in \mathbb{N}$ due to the lower bound in \eqref{Eq: Lower bound on ell} and the fact that $d$-dimensional normal distributions possess finite moments of any order.

	Moreover, the tubular neighbourhood of $\mathbb{S}^{d-1}$ for $\varepsilon\in (0,1)$ is given as
	\[
	N_1(\varepsilon) = N(\varepsilon) = \{ x\in\mathbb{R}^d \colon 1-\varepsilon \leq \Vert x\Vert \leq 1+\varepsilon \}.
	\]
	Thus, by \eqref{Eq: Lower bound on ell} we have $\inf\{\ell(x) \colon | \norm{x} - 1 | \geq \delta \} = \frac 12 \delta^2 > 0$ for all $\delta > 0$,
	which shows that condition (T) is satisfied by Remark \ref{R: Condition that implies (T)}.
	
	Since $N(\varepsilon) \subseteq \mathbb{R}^d \setminus \{0\}$, where $\ell$ is infinitely often continuously differentiable,
	also (S) is satisfied for all $p \in \mathbb{N}$.
	
	Recall that the normal vectors can be represented as $v(u)=u$ for $u\in\mathbb{S}^{d-1}$.
	With this we obtain for any $\varepsilon \in(0,1)$ that
	\begin{align*}
	S: B^{1}_\varepsilon(0) \times \mathbb{S}^{d-1} & \to N(\varepsilon),
	\quad \text{with} \quad
	(t, u) \mapsto (1+t)u.
	\end{align*}
	The first two derivatives of $\ell \circ S$ in the first component are given by
	\begin{align*}
	\frac{\partial (\ell \circ S)}{\partial t} (t, u) &= \sum_{i=1}^d (u_i + tu_i) u_i 
	- \frac{\sum_{i=1}^d (u_i + tu_i) u_i}{\left(\sum_{i=1}^d (u_i + tu_i)^2 \right)^{1/2}}, \\
	\frac{\partial^2 (\ell \circ S)}{\partial^2 t} (t, u) &= \sum_{i=1}^d u_i^2 
	+ \frac{\left(\sum_{i=1}^d (u_i + tu_i) u_i\right)^2}{\left(\sum_{i=1}^d (u_i + tu_i)^2 \right)^{3/2}}  
	-\frac{\sum_{i=1}^d u_i^2}{\left(\sum_{i=1}^d (u_i + tu_i)^2 \right)^{1/2}},
	\end{align*}
	such that 
	\begin{align*}
	\frac{\partial^2 (\ell \circ S)}{\partial^2 t} (0, u) = \|u\|^2=1 > 0
	\end{align*}
	for all $u \in \mathbb{S}^{d-1}$, i.e., (P) holds.
	Hence, $\ell$ satisfies Assumption~\ref{ass: reg_to_ell} and Theorem~\ref{Thm: Manifolds more general version} is applicable.
	
	The density $\phi$ of the limit measure $\mu$ with respect to the intrinsic measure $\mathcal{M}$ on $\mathbb{S}^{d-1}$ satisfies
	\begin{align*}
		\color{black}
	\phi(u) = \frac{ \det\Big(\frac{\partial^2 (
			\ell \circ S)}{\partial^2 t} (0, u)\Big)^{-1/2}}{\int_{\mathbb{S}^{d-1}} \det\Big( \frac{\partial^2 (\ell \circ S)}{\partial^2 t} (0, v)\Big)^{-1/2}
		\, \mathcal{M}(\textup{d}v)}
	= \frac{1}{\mathcal{M}(\mathbb{S}^{d-1})}, \qquad u\in\mathbb{S}^{d-1},
	\end{align*}
	which implies $\mu({\rm d}u) = \mathcal{M}({\rm d}u)/\mathcal{M}(\mathbb{S}^{d-1})$.
	
	We now want to verify the additional assumption required for applying Theorem \ref{Thm: Manifolds convergence from below}.
{\color{black}	Observe that we are in the scenario described in Remark \ref{R: Uniformity of modes - Simpler formulations for special cases}\,(b), 
	as $ \pi_0 \equiv 1 $, $ \ell \circ S (t,u) =  \frac{1}{2}(1+t)^2 - (1+t) + \frac{1}{2}$, and
	for $M=\mathbb{S}^{d-1}$ the function $h$, see \eqref{Eq: Density of Q}, is given by
	\begin{align*}
	h: B^1_\varepsilon(0) \times \mathbb{S}^{d-1} &\to \mathbb{R_+} 
	\quad \text{with} \quad
	(t,u) \mapsto (1+t)^{d-1},
	\end{align*}
	which follows by the definitions in \cite{Weyl}.
	Hence, also \eqref{Eq: Assumption convergence happens from below} is satisfied and $W^p(\mu_n,\mu)$ converges for any $p\geq 1$ as $n^{-1/2}$ to zero.}
\end{example}

\subsection{Comments to related results} 
\label{Sec: Literature_review}

One of our goals is to quantify the weak convergence results formulated in \cite{Hwang}. 
Now we discuss alternative approaches that appear in the literature for achieving this.


For finitely many global minima and H\"older continuity of order $\alpha > 0$ of the function $\ell$ at the minimal points, 
Athreya and Hwang prove in \cite{Gibbsmeasuresasymptotics} convergence results for convergence in distribution of random variables $X_n \sim \mu_n$.
These results are of the form
\[
n^{1/\alpha} \left( X_n - x \right) \xrightarrow{d} Y,	
\]
where $x \in \mathbb{R}^d$ is a global minimal point of $\ell$, the arrow $\xrightarrow{d}$ denotes weak convergence and the distribution of the random variable $Y$ with exponential tails is centred at zero.
For $\alpha = 2$, our setting in Remark~\ref{R: Finitely many minima} can be treated with these results yielding a rate of $1/2$.
However, note that in general such a convergence in distribution as described above 
does not imply Wasserstein convergence of the distribution of $X_n$ to the Dirac measure $\delta_x$ at $x$. 
\textcolor{black}{In \cite{DegenerateMinima}, the results of \cite{Gibbsmeasuresasymptotics} are extended to the case of degenerate minima.}


Convergence rates in the $1$-Wasserstein distance in the case of finitely many global minima are provided by Bras and Pag\`{e}s in \cite{Bras}.
They discuss this convergence as an auxiliary result in the context of Langevin-based algorithms and likewise obtain a rate of $1/2$.
Their setting is very similar to what is assumed in Remark~\ref{R: Finitely many minima}, but is more restrictive, 
since an additional assumptions on the gradient and the Hessian of the function $\ell$ is required.


Finally, in \cite{Debortoli}, de Bortoli and Desolneux provide rates for the convergence in the $1$-Wasserstein distance if $\ell$ takes the form
\[
\ell(x) = \norm{F(x)}^k,
\]
where $k \in \mathbb{N}$ and {\color{black}$F: \mathbb{R}^d \to \mathbb{R}^q$, $q \in \mathbb{N}$}, is some well-behaved function. 
In particular, they allow the set of global minimal points of $\norm{F(x)}^k$ to be an arbitrary compact set.
For this setting, they obtain convergence rates of $1/k$ for the convergence to the limit measure.
In the case $k=2$, i.e., when we can apply our results to the setting in \cite{Debortoli}, 
this matches our rates obtained in both Theorem~\ref{Thm: Manifolds more general version} and Theorem~\ref{Thm: Manifolds convergence from below} for $p=1$.
De Bortoli and Desolneux observe that their results also hold for more general smooth functions $\ell: \mathbb{R}^d \to \mathbb{R}_+$ in the exponent
if the Hessian of $\ell$ is invertible on the set of global minimal points of $\ell$.
This matches our setting for the case $p=1$.
However, unlike de Bortoli and Desolneux, who essentially use Kantorovich-Rubinstein duality to obtain their results, 
we construct explicit couplings to obtain convergence rates.
This approach allows us to naturally derive results for arbitrary order $p\in\mathbb{N}$ of the $p$-Wasserstein distance.

\subsection{Proof of Theorem~\ref{Thm: Manifolds more general version}}
\label{Sec: proof_gen_thm}	

We start with stating auxiliary tools.
The following result is a consequence of an adapted Laplace method. It is proven in Section~\ref{Sec: Proof of psi-zeta-convergence lemma}.
\begin{lem}\label{L: Convergence psi, zeta}
	Suppose that the assumptions of Theorem~\ref{Thm: Manifolds more general version} hold. 
	Recall that $k_i := \dim M_i$ for $i \in \{1, \ldots, m\}$.
	Set
	\begin{align*}
	\psi^{(n)}_i(u) &:= \int_{B_\varepsilon^{d-k_i}(0)} \|t\|^p  \exp\big(-n\ell(S_i(t,u))\big) \pi_0\big(S_i(t,u)\big) h_i(t,u)
	\, \lambda_{d-k_i}(\textup{d}t), 
	\end{align*}
	and consider $\zeta^{(n)}_i$ as defined in \eqref{eq: zeta_i} for $n \in \mathbb{N}$ and $1\leq i \leq m$. 
	Then, for each $i \in \{1, \ldots, m\}$, there exists a constant $c_i > 0$ such that 
	\begin{equation} \label{eq: psi_conv}
	n^{(d-k_i)/2} \psi^{(n)}_i(u) \leq c_i \,n^{-p/2}, \qquad \forall\, u \in M_i.
	\end{equation}
	Moreover, there exists a constant $\widehat{c}_i > 0$, such that for all $u \in M_i$ holds
	\begin{equation} \label{eq: zeta_conv}
	\Bigg|\left(\frac{n}{2\pi}\right)^{(d-k_i)/2} \zeta^{(n)}_i(u) 
	- \pi_0 (S_i(0,u)) \det\left(\frac{\partial^2 (\ell \circ S_i)}{\partial^2 t} (0,u)\right)^{-1/2}\Bigg|
	\leq \widehat{c}_i\, n^{-1}.
	\end{equation}
\end{lem}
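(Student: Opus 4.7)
The plan is to prove both estimates by a parameterized Laplace method: substitute $t = s/\sqrt{n}$ to convert each concentrating integrand into one controlled by a fixed Gaussian weight, and then expand the remaining $n$-dependent factors. The preliminary observation, which I would establish first, is a uniform Taylor expansion of $\ell\circ S_i$ around $t=0$. Since $u \in M_i \subseteq R_\ell$ is a global minimizer, $\ell(S_i(0,u)) = 0$ and $\partial_t(\ell\circ S_i)(0,u) = 0$; hence writing $H_i(u) := \tfrac{\partial^2(\ell\circ S_i)}{\partial^2 t}(0,u)$, condition (S) and compactness of $M_i$ yield
\[
\ell(S_i(t,u)) = \tfrac12 t^{\top} H_i(u)\, t + R(t,u), \qquad |R(t,u)| \leq C\,\|t\|^3,
\]
uniformly in $u \in M_i$. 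Continuity of $H_i(u)$ on compact $M_i$ together with (P) gives a uniform eigenvalue bound from below, so shrinking $\varepsilon$ if necessary one obtains the uniform quadratic lower bound $\ell(S_i(t,u)) \geq c\,\|t\|^2$ on $B^{d-k_i}_\varepsilon(0)\times M_i$ for some $c>0$.

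For \eqref{eq: psi_conv}, the change of variables $t = s/\sqrt n$ introduces a Jacobian factor $n^{-(d-k_i)/2}$ and multiplies the integrand by $\|s\|^p\,n^{-p/2}$. Using the quadratic lower bound on $\ell$ together with the continuity (hence boundedness) of $\pi_0\circ S_i$ and $h_i$ on $\overline{B^{d-k_i}_\varepsilon(0)}\times M_i$, the integrand is dominated by a constant multiple of $\|s\|^p\exp(-c\,\|s\|^2)$. Extending the domain of integration to all of $\mathbb{R}^{d-k_i}$ and using finiteness of the Gaussian moment then yields \eqref{eq: psi_conv}.

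For \eqref{eq: zeta_conv}, the same substitution gives
\[
\bigl(\tfrac{n}{2\pi}\bigr)^{(d-k_i)/2}\zeta^{(n)}_i(u) = (2\pi)^{-(d-k_i)/2}\int_{B^{d-k_i}_{\varepsilon\sqrt n}(0)} e^{-\tfrac12 s^\top H_i(u)s}\, e^{-nR(s/\sqrt n,u)}\, \pi_0(S_i(s/\sqrt n,u))\, h_i(s/\sqrt n,u)\, ds.
\]
I would then Taylor-expand the three $n$-dependent factors: $\exp(-nR(s/\sqrt n,u)) = 1 + n^{-1/2}\alpha_u(s) + O(n^{-1})$ with $\alpha_u$ a cubic polynomial in $s$; $\pi_0(S_i(s/\sqrt n,u)) = \pi_0(S_i(0,u)) + n^{-1/2}\beta_u(s) + O(n^{-1})$ with $\beta_u$ linear in $s$; and, using $h_i(0,u)=1$, $h_i(s/\sqrt n,u) = 1 + n^{-1/2}\gamma_u(s) + O(n^{-1})$ with $\gamma_u$ linear in $s$. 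The decisive cancellation is that all three $n^{-1/2}$-coefficients are odd polynomials in $s$ and therefore, after multiplying out, the total $n^{-1/2}$-coefficient is still odd, hence integrates to zero against the even weight $e^{-\tfrac12 s^\top H_i(u) s}$. The leading term integrates to $(2\pi)^{(d-k_i)/2}\,\pi_0(S_i(0,u))\,\det H_i(u)^{-1/2}$, which after multiplication by the prefactor produces the claimed limit, while the remaining terms combine to the $O(n^{-1})$ bound.

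The main obstacle is establishing uniformity of the $O(n^{-1})$ remainder in $u \in M_i$ and handling the outer tail $\|s\| \geq \delta\sqrt n$. The former rests on the compactness of $M_i$ together with the smoothness of $\ell$, $\pi_0$, $h_i$ and $S_i$ provided by (S), which makes all Taylor remainders, derivative bounds, and the lower eigenvalue bound for $H_i(u)$ uniform; in particular $u \mapsto \det H_i(u)^{-1/2}$ is bounded. The outer tail is exponentially small in $n$ thanks to the uniform quadratic lower bound $\ell(S_i(t,u)) \geq c\,\|t\|^2$ and is therefore absorbed into the $O(n^{-1})$ error. These uniform Laplace-method estimates are exactly what is packaged in Theorem~\ref{Thm: Parametrized Laplace method}, so the argument ultimately reduces to verifying its hypotheses with parameter $u \in M_i$.
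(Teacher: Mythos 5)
Your proposal is correct, but it diverges from the paper's proof in a way worth noting. For \eqref{eq: zeta_conv} both you and the paper invoke the parametrized Laplace method of Theorem~\ref{Thm: Parametrized Laplace method} with $r=1$ and $g(t,u)=\pi_0(S_i(t,u))\,h_i(t,u)$ (you essentially re-derive its first-order expansion by hand; the paper cites it directly via Remark~\ref{R: Application of Laplace's method - p = 1}). For \eqref{eq: psi_conv}, however, the paper's route is more elaborate: it invokes Theorem~\ref{Thm: Parametrized Laplace method} with $r=p/2$ and $g(t,u)=\|t\|^p\pi_0(S_i(t,u))h_i(t,u)$, uses the observation that all partial derivatives of $g$ in $t$ up to order $p-2$ vanish at $t=0$ (so that all expansion coefficients $c_0,\dots,c_{r-1}$ are zero, yielding the $n^{-p/2}$ decay via Remark~\ref{R: Application of Laplace's method - vanashing derivatives of g}), and then reduces odd $p$ to even $p+1$ by a H\"older argument with $\zeta_i^{(n)}$ as normalizing constant. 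Your argument replaces this with a direct domination: rescale $t=s/\sqrt{n}$, use the uniform quadratic lower bound $\ell(S_i(t,u))\ge c\|t\|^2$ (valid on the full $\varepsilon$-ball after combining the Taylor/eigenvalue argument near $t=0$ with the tail condition (T) on the outer annulus) together with boundedness of $\pi_0\circ S_i$ and $h_i$, and bound by a fixed Gaussian moment. This is simpler, avoids the even/odd case distinction entirely, does not need $\|t\|^p$ to be smooth, and makes no use of the vanishing-derivative structure that the paper relies on. The trade-off is that it produces only an upper bound (which is all \eqref{eq: psi_conv} asks for), whereas the paper's machinery would, in principle, identify the leading constant. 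One small point to tighten: the claim that ``shrinking $\varepsilon$'' gives the quadratic lower bound on the entire $B_\varepsilon^{d-k_i}(0)\times M_i$ should be stated as a two-zone argument, quadratic near the origin from Taylor/(P), and a strictly positive floor on the annulus from (T) with $c$ chosen accordingly; your final paragraph gestures at this but it is worth making explicit since the domain of integration in $\psi^{(n)}_i$ is the fixed $\varepsilon$-ball.
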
	
\begin{rem}\label{R: Bound on normalizing constant for manifolds}
	In the setting of the previous lemma 
	for $i\in \{1,\dots,m\}$ set 
	\[
	\kappa_i:= \frac{(2 \pi)^{(d-k_i)/2}}{2} \min_{u \in M_i} \pi_0(S_i(0,u)) \det\Big(\frac{\partial^2 (\ell \circ S_i)}{\partial^2 t}(0,u)\Big)^{-1/2}>0.
	\]
	Then, \eqref{eq: zeta_conv} 
	implies that there exists an $n_0 \in \mathbb{N}$ such that for all $u \in M_i$ and all $n \geq n_0$ we have
	\begin{align} \label{al: low_bnd} 
	&n^{(d-k_i)/2} \zeta^{(n)}_i(u)\geq \kappa_i > 0.
	\end{align}
	For any $n\in\mathbb{N}$ define $a^{(n)}_i := n^{(d-k_i)/2} \int_{M_i} \zeta^{(n)}_i(u) \mathcal{M}_i(\d u)$ and note that \eqref{al: low_bnd} implies
	\begin{equation} 
	\label{low_bnd_ext1}
	n^{(d-k_i)/2} \int_{M_i} \zeta^{(n)}_i(u) \mathcal{M}_i(\d u) \geq \widehat{\kappa}_i, \quad n\in\mathbb{N},
	\end{equation}	
	with $\widehat{\kappa}_i = \min\{a^{(1)}_i,\dots,a^{(n_0-1)}_i,\mathcal{M}_i(M_i)\,\kappa_i \}>0$.
	Therefore, for $j \in \{1, \ldots, m\}$ with $\dim M_j = k = \max_{i =1 ,\ldots, m}k_i$, we have by \eqref{Eq: Measureconversion on tubular neighbourhood} that
	\begin{align*}
	n^{(d-k)/2} Z_n & =
	n^{(d-k)/2 }\int_{\mathbb{R}^d} \exp(-n\ell(x)) \pi_0(x) \, \lambda_d(\textup{d}x)\\
	& \geq n^{(d-k)/2} \int_{N_j(\varepsilon)} \exp(-n\ell(x)) \pi_0(x) \, \lambda_d(\textup{d}x) \\
	& =  n^{(d-k)/2} \int_{M_j} \zeta^{(n)}_j(u) \, \mathcal{M}_j( \textup{d}u)
	\geq  \widehat{\kappa}_j > 0, \quad \forall \, n\in\mathbb{N}.
	\end{align*}
	In particular, this means that $n^{(d-k)/2}Z_n$ is bounded from below by a positive constant that does not depend on $n$.
\end{rem}	
Now we formulate an estimate of the $p$-Wasserstein distance $W^p(\nu_n, \mu)$ with $\nu_n$ being defined as in \eqref{al: nu_n}. For the proof of this result we refer to Section~\ref{Sec: Convergence of intermediate step}.
\begin{lem}\label{L: Wassersteinconvergence of intermediate measure}
	Let the assumptions of Theorem~\ref{Thm: Manifolds more general version} 
	be satisfied and let $\nu_n$ be given as in \eqref{al: nu_n}.
	Then there exists a constant $K \in (0,\infty)$ such that
	\[
	W^p(\nu_n, \mu) \leq K n^{-1/(2p)}.
	\]
\end{lem}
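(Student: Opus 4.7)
The plan is to reduce the $p$-Wasserstein estimate to a total-variation estimate, exploiting that both $\nu_n$ and $\mu$ (viewed as probability measures on $\mathbb{R}^d$) are supported on the compact set $R_\ell = \bigcup_{i=1}^m M_i$. Writing $D := \mathrm{diam}(R_\ell) < \infty$, the standard overlap coupling (the diagonal coupling of the common part $\nu_n \wedge \mu$ combined with any product coupling of the residuals $\nu_n - \nu_n \wedge \mu$ and $\mu - \nu_n \wedge \mu$) has transport cost at most $D^p$ times the residual mass, and this residual mass equals the unnormalised total variation $\|\nu_n - \mu\|_{\mathrm{TV}}$. Since $\mu$ is supported on $R'$ and $\nu_n|_{R'}$ admits a density $f_n$ with respect to $\mathcal{M}|_{R'}$ (read off from \eqref{al: nu_n}), one obtains
\[
W^p(\nu_n,\mu) \leq D \cdot \bigl( \nu_n(R_\ell \setminus R') + \| f_n - \phi \|_{L^1(\mathcal{M}|_{R'})} \bigr)^{1/p}.
\]
It therefore suffices to show that each of the two terms in the bracket is $O(n^{-1/2})$.

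For the mass on the lower-dimensional manifolds, every $M_i$ with $M_i \not\subseteq R'$ satisfies $k_i < k$, so \eqref{eq: zeta_conv} together with continuity on the compact manifold $M_i$ yields the uniform bound $\zeta_i^{(n)}(u) \leq C_i \, n^{-(d-k_i)/2}$. Integrating over $M_i$ and dividing by $Z_n \geq \widehat{c}\, n^{-(d-k)/2}$ (from Remark~\ref{R: Bound on normalizing constant for manifolds}) gives $\nu_n(M_i) \leq C'_i\, n^{(k_i-k)/2} \leq C'_i\, n^{-1/2}$; the extra normalising factor $[\sum_j \mu_n(N_j(\varepsilon))]^{-1}$ is harmless since the tail condition~(T) forces $\sum_j \mu_n(N_j(\varepsilon)) \to 1$ at a super-polynomial rate. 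For the density discrepancy on $R'$, the expansion \eqref{eq: zeta_conv} combined with the decomposition of $Z_n$ into its tubular-neighbourhood contributions produces
\[
n^{(d-k)/2}\, \zeta_i^{(n)}(u) = (2\pi)^{(d-k)/2}\, \pi_0(S_i(0,u))\, \det\Big(\frac{\partial^2 (\ell \circ S_i)}{\partial^2 t}(0,u)\Big)^{-1/2} + O(n^{-1})
\]
for $M_i \subseteq R'$, together with $n^{(d-k)/2}\, Z_n = A + O(n^{-1/2})$, where $A$ equals $(2\pi)^{(d-k)/2}$ times the denominator of $\phi$ in \eqref{eq:phi} and the $O(n^{-1/2})$ residue in the denominator stems precisely from the lower-dimensional manifolds. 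A short quotient calculation then produces $|f_n(u) - \phi(u)| = O(n^{-1/2})$ uniformly for $u \in R'$, and integrating against the finite measure $\mathcal{M}|_{R'}$ delivers the required $L^1$ estimate.

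The main delicate point is the quotient analysis: since $\zeta_i^{(n)}(u)$ and $Z_n$ both scale like $n^{-(d-k)/2}$, the leading terms cancel and the first-order correction must be tracked carefully. It is the $O(n^{-1/2})$ perturbation of $n^{(d-k)/2}\, Z_n$ coming from the lower-dimensional manifolds that bottlenecks the total-variation rate at $n^{-1/2}$, and this in turn limits the overlap coupling bound to $n^{-1/(2p)}$. If every $M_i$ had dimension $k$, the same scheme would instead deliver the sharper rate $n^{-1/p}$, consistent with the improved statement available in Theorem~\ref{Thm: Manifolds convergence from below} under its additional hypothesis.
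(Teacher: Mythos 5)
Your proposal is correct and takes essentially the same route as the paper: reduce $W^p(\nu_n,\mu)$ to the total variation via the compactness of $R_\ell$ (the paper packages this as Lemma~\ref{L: Coupling lemma for intermediate step}), then control the density discrepancy by splitting into contributions from the top-dimensional manifolds ($M_i\subseteq R'$, where Lemma~\ref{L: Convergence psi, zeta} gives an $O(n^{-1})$ error) and the lower-dimensional ones ($M_i\not\subseteq R'$, which contribute the $O(n^{-1/2})$ bottleneck). The only cosmetic difference is that you bound the $L^1$ density gap directly by decomposing over $R'$ and $R_\ell\setminus R'$, whereas the paper proves a uniform $L^\infty$ bound $\sup_u|g_n(u)-g(u)|\leq Kn^{-1/2}$ and multiplies by $\rho(R_\ell)<\infty$; the casework and the use of the Laplace expansion are identical in substance.
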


The former result relies on a generic estimate of the $p$-Wasserstein distance which does not explicitly use a coupling construction. 
It exploits that $\nu_n$ is an approximation of $\mu$.
Now we deliver the coupling construction for estimating $W^p(\mu_n,\nu_n)$ which eventually gives the desired result.
\begin{proof}[Proof of Theorem \ref{Thm: Manifolds more general version}]
	We apply the strategy of proof outlined in Remark~\ref{rem: strategy_of_proof}. Note that from Lemma~\ref{L: Wassersteinconvergence of intermediate measure} we obtain an estimate of $W^p(\nu_n,\mu)$. Therefore, it is enough to prove a suitable bound of $W^{p}(\mu_n,\nu_n)$. To this end, we construct a coupling in three steps and estimate the resulting integrals. 
	Fix $i \in \{1, \ldots, m\}$ and $n \in \mathbb{N}$.
	
	\emph{1. Tubular neighbourhood part:}
	We start with constructing the part of the coupling of $\mu_n$ and $\nu_n$ that relates to the tubular neighbourhood $N_i(\varepsilon)$ of $M_i$.
	To this end, we use the measure $Q_i$ on $B^{d-k_i}_\varepsilon(0) \times M_i$ that is defined in \eqref{Eq: Density of Q}. 		
	Let $X_i$ and $Y_i$ be two random variables on $B_\varepsilon^{d-k_i}(0)$ respectively $M_i$ with joint distribution
	\[
	(X_i,Y_i) \sim \frac{1}{\lambda_d(N_i(\varepsilon))}Q_i.
	\]
	Obviously $\frac{1}{\lambda_d(N_i(\varepsilon))}Q_i$ is by \eqref{Eq: Normalizing constant of Q} a probability measure.
	Now we interpret $Z^{-1}_n \exp(-n\ell) \pi_0$ as a function on $N_i(\varepsilon) \times M_i$ mapping to $(0,\infty)$ via $(z,y) \mapsto Z^{-1}_n \exp\big(-n\ell(z)\big) \pi_0(z)$ and set
	\begin{equation}\label{Eq: Definition gamma_i,n}
	\gamma^{(n)}_i(\d z,\d y) := Z^{-1}_n \exp\big(-n\ell(z)\big) \pi_0(z) \cdot \mathbb{P}^{\left(S_i(X_i,Y_i), Y_i\right)}(\d z,\d y).
	\end{equation}
	Since $S_i$ is a diffeomorphism, 
	the set $\left\{S_i(A) \mid A\in \mathcal{B}\left(B_\varepsilon^{d-k_i}(0)\right) \otimes  \mathcal{B}(M_i)\right\}$ 
	is an intersection stable generator\footnote{\textcolor{black}{A collection of sets $\mathcal{E}$ is called an \emph{intersection stable generator} of a $\sigma$-algebra $\mathcal{F}$ if the smallest $\sigma$-algebra $ \sigma(\mathcal{E}) $ that contains $ \mathcal{E} $ satisfies $ \sigma(\mathcal{E}) = \mathcal{F} $, and for all $ E_1, E_2 \in \mathcal{E}  $ holds $ E_1 \cap E_2 \in \mathcal{E} $.}} of $\mathcal{B}\left(N_i(\varepsilon)\right)$.
	For $A\in \mathcal{B}\left(B_\varepsilon^{d-k_i}(0)\right) \otimes \mathcal{B}(M_i)$
	we have by \eqref{Eq: Imagemeasure of Q under T} and the bjectivity of $S_i$ that 
	\begin{align*}
	\mathbb{P}^{(S_i(X_i,Y_i))}\big(S_i(A)\big) & = \mathbb{P}\big((X_i,Y_i) \in A\big)= \frac{1}{\lambda_d(N_i(\varepsilon))}Q_i(A)\\
	&= \frac{1}{\lambda_d(N_i(\varepsilon))}(S_i)_*(Q_i)(S_i(A)) = \frac{1}{\lambda_d(N_i(\varepsilon))}\lambda_d\vert_{N_i(\varepsilon)}(S_i(A)),
	\end{align*}
	which yields
	\[
	\mathbb{P}^{(S_i(X_i,Y_i))} = \frac{1}{\lambda_d(N_i(\varepsilon))}\lambda_d\vert_{N_i(\varepsilon)}.
	\]
	Hence, for $A \in \mathcal{B}\left(N_i(\varepsilon)\right)$ we obtain
	\begin{align*}
	\gamma^{(n)}_i(A \times M_i) & = \int_A Z^{-1}_n \exp(-n\ell(z)) \pi_0(z) \, \mathbb{P}^{(S_i(X_i,Y_i))}(\textup{d}z)
	= \frac{\mu_n(A \cap N_i(\varepsilon))}{\lambda_d(N_i(\varepsilon))},
	\end{align*}
	and for $B \in \mathcal{B}(M_i)$ we have
	\begin{align*}
	&\gamma^{(n)}_i\big(N_i(\varepsilon) \times B\big) 
	= \int_{N_i(\varepsilon) \times M_i} \mathbbm{1}_B(y) Z^{-1}_n \exp(-n\ell(z)) \pi_0(z) \mathbb{P}^{(S_i(X_i,Y_i), Y_i)}(\textup{d}z, \textup{d}y)\\
	&\quad {\color{black}= \mathbb{E} \Big(\mathbbm{1}_B(Y_i)Z_n^{-1} \exp\big(-n \ell (S_i(X_i,Y_i))\big) \pi_0\big(S_i(X_i,Y_i)\big)\Big)}\\
	& \quad= \int_{B_\varepsilon^{d-k_i}(0) \times M_i} \mathbbm{1}_B(u) Z^{-1}_n \exp\big(-n\ell(S_i(t,u))\big) \pi_0\big(S_i(t,u)\big) 
	\, \mathbb{P}^{(X_i,Y_i)}(\textup{d}t, \textup{d}u)\\		
	&\quad = \int_B\int_{B_\varepsilon^{d-k_i}(0)} \frac{ \exp\big(-n\ell(S_i(t,u))\big) \pi_0\big(S_i(t,u)\big) h_i(t,u)}{Z_n\lambda_d(N_i(\varepsilon))}
	\, \lambda_{d-k_i}(\textup{d}t) \,  \mathcal{M}_i(\textup{d}u)\\
	&\quad =\frac{1}{\lambda_d(N_i(\varepsilon))} \int_B Z^{-1}_n \zeta^{(n)}_i(u) \,  \mathcal{M}_i(\textup{d}u),
	\end{align*}
	as $\color{black}\mathbb{P}^{(X_i,Y_i)} \sim \frac{Q_i}{\lambda_d(N_i(\varepsilon))}$.
	This means that the marginal measures of $\gamma^{(n)}_i$ are given by $\frac{1}{\lambda_d(N_i(\varepsilon))}\mu_n\vert_{N_i(\varepsilon)}$ and 
	$\frac{\zeta^{(n)}_i(u)}{Z_n\lambda_d(N_i(\varepsilon))}\cdot \mathcal{M}_i(\d u)$. \par
	Let $V_i(u)$ be the matrix with columns $v_i^{(1)}(u), \ldots, v_i^{(d-k_i)}(u)$ and 
	let $\psi^{(n)}_i(u)$ be as in Lemma~\ref{L: Convergence psi, zeta}.
	Then, we have
	\begin{align*}
	& \int_{N_i(\varepsilon) \times M_i} \|z-y\|^p \, \gamma^{(n)}_i(\textup{d}z, \textup{d}y)\\
	& =\int_{N_i(\varepsilon) \times M_i} Z^{-1}_n \|z-y\|^p  \exp(-n\ell(z)) \pi_0(z) \, \mathbb{P}^{(S_i(X_i,Y_i), Y_i)}(\textup{d}z, \textup{d}y) \\
	& =\int_{B_\varepsilon^{d-k_i}(0) \times M_i}Z^{-1}_n \|S_i(t,u)-u\|^p \exp\big(-n\ell(S_i(t,u))\big) \pi_0\big(S_i(t,u)\big) 
	\\
	&\hspace{10cm} \, 
	\times
	\mathbb{P}^{(X_i,Y_i)}(\textup{d}t, \textup{d}u)
	\\
	&= \frac{1}{Z_n\lambda_d(N_i(\varepsilon))} 
	\int_{M_i}  \int_{B_\varepsilon^{d-k_i}(0)} \|V_i(u)t\|^p \exp\big(-n\ell(S_i(t,u))\big) \pi_0\big(S_i(t,u)\big)  \\
	&\hspace{8.5cm} \times h_i(t,u)\, \lambda_{d-k_i}(\textup{d}t) \, \mathcal{M}_i(\textup{d}u) \\
	& \leq  \frac{1}{Z_n\lambda_d(N_i(\varepsilon))} \int_{M_i}   \|V_i(u)\|^p \psi^{(n)}_i(u) \, \mathcal{M}_i(\textup{d}u),
	\end{align*}
	where the third equality follows by the definition of $S_i$, see \eqref{al: def_S}.
	Note that $\|V_i(u)\|^p \equiv 1$ for all $u \in M_i$ as the columns of $V_i$ form an orthonormal basis.
	
	Therefore, using 
	\eqref{eq: psi_conv}
	and Remark \ref{R: Bound on normalizing constant for manifolds}, there exists a constant $K \in (0, \infty)$ such that
	\begin{align}\label{Eq: Convergence on the tubular neighbourhood}
	\begin{aligned}
	& \int_{N_i(\varepsilon) \times M_i} \|z-y\|^p \, \gamma^{(n)}_i(\textup{d}z, \textup{d}y)\\
	& \leq \frac{1}{\lambda_d(N_i(\varepsilon))} \cdot n^{(k-d)/2} Z^{-1}_n  \cdot n^{(k_i-k)/2} 
	\cdot \int_{M_i}  
	n^{(d-k_i)/2} \psi^{(n)}_i(u)
	\mathcal{M}_i(\textup{d}u)
	\leq K n^{-p/2}.
	\end{aligned}
	\end{align}
	
	\emph{2. Non-tubular neighbourhood part:}
	Next, we provide the part of the coupling that relates to $D := \mathbb{R}^d\setminus \bigcup_{i=1}^m N_i(\varepsilon)$.
	Note that $\sum_{i=1}^m \mu_n\big(N_i(\varepsilon)\big) < 1$.
	Therefore, we may define 
	the product measure
	\begin{align*}
	\xi_n & := \frac{1}{\mu_n(D)}  \mu_n\vert_D \otimes \beta_n
	\end{align*}
	where
	\begin{align*}
	\beta_n(\d u) := \left(\frac{1}{Z_n\sum_{j=1}^m \mu_n(N_j(\varepsilon)) }- Z^{-1}_n\right)\sum_{i=1}^m  \zeta^{(n)}_i(u) \cdot \mathcal{M}_i(\d u)
	\end{align*}
	is a measure on $R_\ell$.
	Note that by \eqref{Eq: Measureconversion on tubular neighbourhood} we have
	\begin{align}\label{Eq: Value beta_n(N)}
	\begin{aligned}
	& \beta_n(R_\ell) = \left(\frac{1}{\sum_{j=1}^m \mu_n(N_j(\varepsilon)) }- 1\right) \sum_{i=1}^m \int_{M_i} Z^{-1}_n \zeta^{(n)}_i(u) \, \mathcal{M}_i(\textup{d}u)   \\
	& =\left(\frac{1}{\sum_{j=1}^m \mu_n(N_j(\varepsilon)) }- 1\right)  \sum_{i=1}^m \mu_n\big(N_i(\varepsilon)\big) 
	= 1 - \sum_{i=1}^m \mu_n\big(N_i(\varepsilon)\big)
	= \mu_n(D).
	\end{aligned}
	\end{align}
	Moreover, by (T) from Assumption~\ref{ass: reg_to_ell} there exists a number $\delta > 0$ such that $\ell(z) \geq \delta$ for all $z \in D$. 
	Hence,
	\begin{align*}
	& \int_{\mathbb{R}^d \times R_\ell} \|z-y\|^p \, \xi_n(\textup{d}z, \textup{d}y) \\
	& \qquad= \frac{1}{\mu_n(D)}  \int_{R_\ell} \int_D Z^{-1}_n \exp(-n\ell(z)) \pi_0(z) \|z-y\|^p \, \lambda_d(\textup{d}z) \beta_n(\textup{d}y) \\
	& \qquad \leq \exp(-(n-1)\delta)n^{(d-k)/2}  \cdot n^{-(d-k)/2}Z^{-1}_n \\
	& \qquad \qquad  \cdot \frac{1}{\mu_n(D)}  \int_{R_\ell} \int_D  \exp(-\ell(z)) \pi_0(z) \|z-y\|^p \, \lambda_d(\textup{d}z) \beta_n(\textup{d}y).
	\end{align*}
		{\color{black}By the fact that 
		\[ 
		\norm{z-y}^p \leq 2^{p-1} \left(\norm{z-y_0}^p + \norm{y-y_0}^p\right), \quad  z,y \in \mathbb{R}^d,
		\] 
		we have for all $y\in \mathbb{R}^d$ that
		\begin{align*}
		\quad & \int_D  \exp(-\ell(z)) \|z-y\|^p \pi_0(z) \, \lambda_d(\textup{d}z)\\ & \leq 2^{p-1}\left(\int_{\mathbb{R}^d}  \exp(-\ell(z)) \|z-y_0\|^p \pi_0(z) \, \lambda_d(\textup{d}z) + \norm{y-y_0}^p Z_1\right) < \infty
		\end{align*}
		by Assumption (I).
		Moreover $y\mapsto\int_D  \exp(-\ell(z)) \|z-y\|^p \pi_0(z) \, \lambda_d(\textup{d}z)$ is continuous, and therefore bounded on $R_\ell$. 
	Additionally, we know by} \eqref{Eq: Value beta_n(N)} that $\frac{1}{\mu_n(D)}  \beta_n$ is a  probability measure, 
	such that there exists a constant $\kappa < \infty$ that satisfies
	\[
	\frac{1}{\mu_n(D)}  \int_{R_\ell} \int_D  \exp(-\ell(z)) \pi_0(z) \|z-y\|^p \, \lambda_d(\textup{d}z) \beta_n(\textup{d}y) \leq \kappa.
	\]
	Together with Remark \ref{R: Bound on normalizing constant for manifolds}, which implies that $n^{-(d-k)/2}Z_n^{-1}$ is bounded from above for all $n$, 
	there exists a constant $K \in (0,\infty)$ such that
	\begin{equation}\label{Eq: Convergence of the rest}
	\int_{\mathbb{R}^d \times R_\ell} \|z-y\|^p \, \xi_n(\textup{d}z, \textup{d}y) \leq K n^{-p/2}.
	\end{equation}
	
	\emph{3. Final coupling:}
	Now, we are able to provide a coupling $\eta_n\in C(\mu_n,\nu_n)$ by using $\gamma^{(n)}_i$ and $\xi_n$.
	For $n \in \mathbb{N}$ set
	\[
	\eta_n := \sum_{i=1}^m \lambda_d\big(N_i(\varepsilon)\big) \gamma^{(n)}_i + \xi_n.
	\]
	We check that it is indeed in $C(\mu_n,\nu_n)$. For $A \in \mathcal{B}(\mathbb{R}^d)$ we have 
	\begin{align*}
	\eta_n(A\times R_\ell) & = \sum_{i=1}^m \lambda_d(N_i(\varepsilon))\gamma^{(n)}_i(A \times M_i) + \xi_n(A \times R_\ell)\\
	& = \sum_{i=1}^m \lambda_d(N_i(\varepsilon)) \frac{\mu_n(A \cap N_i(\varepsilon))}{\lambda_d(N_i(\varepsilon))}
	+ \mu_n(A \cap D) \frac{\mu_n(D)}{\mu_n(D)} 
	= \mu_n(A),
	\end{align*}
	and for $B \in \mathcal{B}(R_\ell)$ we obtain
	\begin{align*}
	\eta_n(\mathbb{R}^d\times B) 
	& = \sum_{i=1}^m \lambda_d(N_i(\varepsilon)) \gamma^{(n)}_i\big(N_i(\varepsilon) \times B\big) + \xi_n(\mathbb{R}^d \times B)\\
	& = \sum_{i=1}^m \lambda_d(N_i(\varepsilon)) \frac{\int_B Z^{-1}_n \zeta^{(n)}_i(u) \, \mathcal{M}_i(\textup{d}u)}{\lambda_d(N_i(\varepsilon))}\\ 
	& \quad + \sum_{i=1}^m \int_B \left(\frac{}{Z_n\sum_{j=1}^m \mu_n(N_j(\varepsilon)) }- Z^{-1}_n\right)  \zeta^{(n)}_i(u)  \mathcal{M}_i(\textup{d}u))  \\
	&= \frac{1}{\sum_{j=1}^m \mu_n(N_j(\varepsilon)) }\sum_{i=1}^m \int_B Z^{-1}_n  \zeta^{(n)}_i(u) \mathcal{M}_i(\textup{d}u)) = \nu_n(B).
	\end{align*}
	Thus, eventually \eqref{Eq: Convergence on the tubular neighbourhood} and \eqref{Eq: Convergence of the rest} give
	\begin{align*}
	& \big(W^p(\mu_n, \nu_n)\big)^p  \leq \int_{\mathbb{R}^d \times R_\ell} \|z-y\|^p \, \eta_n(\textup{d}z, \textup{d}y)\\
	& =  \sum_{i=1}^m \lambda_d(N_i(\varepsilon)) \int_{N_i(\varepsilon)\times M_i} \|z-y\|^p \, \gamma^{(n)}_i(\textup{d}z, \textup{d}y) 
	+ \int_{\mathbb{R}^d \times R_\ell} \|z-y\|^p \, \xi_n(\textup{d}z, \textup{d}y) \\
	& \leq  \kappa n^{-p/2}
	\end{align*}
	for some constant $\kappa \in (0, \infty)$, which yields
	$
	W^p(\mu_n, \nu_n) \leq \widetilde{K} n^{-1/2}
	$
	for some constant $\widetilde{K} \in (0, \infty)$.
\end{proof}

\subsection{Proof of Theorem~\ref{Thm: Manifolds convergence from below}}
\label{Sec: proof_est_spec}
In contrast to the proof of Theorem~\ref{Thm: Manifolds more general version}, here we immediately construct a suitable coupling of $\mu_n$ and $\mu$. This is possible since by assumption we know that 
\begin{equation*}
\phi(u) \geq \sum_{i=1}^m Z_n^{-1} \zeta_{i}^{(n)}(u) \mathbbm{1}_{M_i}(u), \quad \forall\, u\in R_\ell,\, n\in\mathbb{N}.
\end{equation*}
After employing the coupling, we bound the resulting integrals by applying Lemma~\ref{L: Convergence psi, zeta} and its consequences.

\begin{proof}[Proof of Theorem \ref{Thm: Manifolds convergence from below}]
	
	We use (again) three steps to get the desired coupling and its integral properties. 
	Fix $i \in \{1, \ldots, m\}$ and $n\in\mathbb{N}$.
	
	\emph{1. Tubular neighbourhood part:}
	We provide the part of the coupling that relates to the tubular neighbourhood $N_i(\varepsilon)$.
	We use a normalized version of $\gamma^{(n)}_i$ from the proof of Theorem \ref{Thm: Manifolds more general version}, 
	which is defined in \eqref{Eq: Definition gamma_i,n}.
	Namely, set
	\[
	\widetilde{\gamma}^{(n)}_i : = \frac{1}{\gamma^{(n)}_i\left(N_i(\varepsilon) \times M_i\right) }\,\gamma^{(n)}_i.
	\] 
	Note that since the marginal measures of $\gamma^{(n)}_i$ are $\frac{1}{\lambda_d(N_i(\varepsilon))}\mu_n\vert_{N_i(\varepsilon)}$ 
	and $\frac{\zeta^{(n)}_i(u)}{Z_n\lambda_d(N_i(\varepsilon))}\cdot \mathcal{M}_i(\d u)$, we have
	\begin{align*}
	\gamma^{(n)}_i\big(N_i(\varepsilon) \times M_i\big) = \frac{1}{\lambda_d\big(N_i(\varepsilon)\big)}\mu_n\big(N_i(\varepsilon)\big).
	\end{align*}
	Hence, for $A \in \mathcal{B}\big(N_i(\varepsilon)\big)$ this gives
	\begin{align*}
	\widetilde{\gamma}^{(n)}_i(A\times M_i)  
	= \frac{\lambda_d\big(N_i(\varepsilon)\big)\mu_n\big(N_i(\varepsilon)\cap A \big)}{\mu_n\big(N_i(\varepsilon)\big)\lambda_d\big(N_i(\varepsilon)\big)} 
	= \frac{\mu_n\big(N_i(\varepsilon)\cap A\big)}{\mu_n\big(N_i(\varepsilon)\big)},
	\end{align*}
	and for $B \in \mathcal{B}(M_i)$ we obtain
	\begin{align*}
	\widetilde{\gamma}^{(n)}_i(N_i(\varepsilon)\times B)  
	& = \frac{\lambda_d\big(N_i(\varepsilon)\big)}{\mu_n\big(N_i(\varepsilon)\big)} \frac{\int_B Z^{-1}_n \zeta^{(n)}_i(u) 
		\,  \mathcal{M}_i(\textup{d}u)}{\lambda_d\big(N_i(\varepsilon)\big)}\\
	& = \frac{1}{\mu_n\big(N_i(\varepsilon)\big)} \int_B Z^{-1}_n  \zeta^{(n)}_i(u) \,  \mathcal{M}_i(\textup{d}u),
	\end{align*}
	such that the marginal measures of $\widetilde{\gamma}^{(n)}_i$ are $\frac{1}{\mu_n(N_i(\varepsilon))}\mu_n\vert_{N_i(\varepsilon)}$ 
	and $\frac{\zeta^{(n)}_i(u)}{Z_n\mu_n(N_i(\varepsilon))}\cdot \mathcal{M}_i(\d u)$.
	
	By \eqref{low_bnd_ext1} we have for $\widehat\kappa_i>0$ that
	\[
	n^{(d-k)/2} \int_{N_i(\varepsilon)} \exp(-n\ell(z)) \pi_0(z) \, \lambda_d(\textup{d}z)
	= n^{(d-k)/2} \int_{M_i} \zeta^{(n)}_i(u) \mathcal{M}_i(\d u) \geq \widehat{\kappa}_i.
	\]
	By \eqref{Eq: Convergence on the tubular neighbourhood} 
	this leads, 	for some constant $K \in (0,\infty)$,
	to
	\begin{align}\label{Eq: Manifolds convergence from below - convergence on tubular neighbourhood}
	\begin{aligned}
	& \int_{\mathbb{R}^d \times M_i} \|z-y\|^p \, \widetilde{\gamma}^{(n)}_i(\textup{d}z, \textup{d}y)
	\leq K n^{-p/2}.
	\end{aligned}
	\end{align}
	
	\emph{2. Non-tubular neighbourhood:}	
	Set $D := \mathbb{R}^d\setminus \big(\bigcup_{i=1}^m N_i(\varepsilon)\big)$ and define a measure
	\[
	\widetilde{\xi}_n({\rm d}z,{\rm d}y) :=  \frac{\mu_n\vert_D({\rm d}z)}{\mu_n(D)} \otimes \left( \frac{\phi(y) - \sum_{i=1}^m Z^{-1}_n \zeta^{(n)}_i(y) \mathbbm{1}_{M_i}(y) }
	{\int_{R_\ell} (\phi - \sum_{i=1}^m Z^{-1}_n \zeta^{(n)}_i \mathbbm{1}_{M_i})  \, \textup{d}\mathcal{M} } \cdot  \mathcal{M}(\d y) \right)
	\]
	on $D\times R_\ell$.
	By (T) of Assumption \ref{ass: reg_to_ell} there exists a $\delta > 0$ such that $\ell(z) \geq \delta$ for all $z \in D$ and we have
	\begin{align*}
	&\mu_n(D) \int_{\mathbb{R}^d \times R_\ell} \|z-y\|^p \, \widetilde{\xi}_n(\textup{d}z, \textup{d}y) \\
	&\quad \leq 
	\frac{\exp(-(n-1)\delta )n^{(d-k)/2} \cdot n^{-(d-k)/2}Z^{-1}_n }{\int_{R_\ell} ( \phi - \sum_{i=1}^m Z^{-1}_n \zeta^{(n)}_i \mathbbm{1}_{M_i}) \, \textup{d}\mathcal{M}}\\
	& \qquad \cdot \int_{R_\ell} \bigg(\phi(y) - \sum_{i=1}^m Z^{-1}_n \zeta^{(n)}_i (y) \mathbbm{1}_{M_i} (y)\bigg) 
	\int_{D} \|z-y\|^p  \exp(-\ell(z)) \pi_0(z)
	\lambda_d(\textup{d}z)\, \mathcal{M} (\textup{d}y).
	\end{align*}
	Note that $\int_{D} \|z-y\|^p  \exp(-\ell(z)) \pi_0(z) \, \lambda_d(\textup{d}z)$ is continuous in $y$ and therefore bounded on $R_\ell$.
	Moreover, $\frac{\phi(y) - \sum_{i=1}^m  Z^{-1}_n\zeta^{(n)}_i(y) \mathbbm{1}_{M_i}(y) }
	{\int_{R_\ell} (\phi - \sum_{i=1}^m Z^{-1}_n\zeta^{(n)}_i \mathbbm{1}_{M_i}) \, \textup{d}\mathcal{M}} \cdot  \mathcal{M}(\d y)$ 
	is a probability measure, and by Remark \ref{R: Bound on normalizing constant for manifolds}, $n^{-(d-k)/2}Z^{-1}_n$ is bounded from above (in $n$).
	Hence, there is a constant $K \in(0, \infty)$ such that
	\begin{equation}\label{Eq: Manifolds convergence from below - convergence of the rest}
	\mu_n(D) \int_{\mathbb{R}^d \times R_\ell} \|z-y\|^p \, \widetilde{\xi}_n(\textup{d}z, \textup{d}y) \leq K n^{-p/2}.
	\end{equation}
	
	\emph{3. Final coupling:} 
	Using $\widetilde{\gamma}^{(n)}_i$ and $\widetilde{\xi}_n$, we are able to provide a coupling $\widetilde{\eta}_n\in C(\mu_n,\mu)$.
	Define
	\begin{equation}\label{Eq: Rate providing coupling for convergence from below}
	\widetilde{\eta}_n := \sum_{i=1}^m \mu_n\big(N_i(\varepsilon)\big) \widetilde{\gamma}^{(n)}_i  + \mu_n(D) \widetilde{\xi}_n.
	\end{equation}
	Now we check that this is indeed such a claimed coupling. For this let
	$A \in \mathcal{B}(\mathbb{R}^d)$. Then
	\begin{align*}
	\widetilde{\eta}_n(A \times R_\ell) &= \sum_{i=1}^{m} \mu_n\big(N_i(\varepsilon)\big) \widetilde{\gamma}^{(n)}_i(A \times R_\ell)  + \mu_n(D) \widetilde{\xi}_n(A \times R_\ell) \\
	&= \sum_{i=1}^{m} \mu_n\big(N_i(\varepsilon)\big) \frac{\mu_n\big(A \cap N_i(\varepsilon)\big)}{\mu_n\big(N_i(\varepsilon)\big)} 
	+ \mu_n(D)  \frac{\mu_n(A \cap D)}{\mu_n(D)}  = \mu_n(A).
	\end{align*}
	Moreover, by \eqref{Eq: Measureconversion on tubular neighbourhood} we have
	\begin{align*}
	&\int_{R_\ell}(\phi(y)  - \sum_{i= 1}^m Z^{-1}_n\zeta^{(n)}_i(y) \mathbbm{1}_{M_i} (y)) \, \mathcal{M} (\textup{d}y) 
	= 1 - \sum_{i=1}^{m} \mu_n(N_i(\varepsilon)) = \mu_n(D).
	\end{align*}
	We thus obtain for $B \in \mathcal{B}(\mathbb{R}^d)$ that
	\begin{align*}
	\widetilde{\eta}_n\left(\mathbb{R}^d \times B\right) 
	&= \sum_{i=1}^{m} \mu_n\big(N_i(\varepsilon)\big) \widetilde{\gamma}^{(n)}_i\left(\mathbb{R}^d \times B\right)  
	+ \mu_n(D) \widetilde{\xi}_n\left(\mathbb{R}^d \times B\right) \\
	& =  \sum_{i=1}^{m} \mu_n\big(N_i(\varepsilon)\big) \frac{\int_{M_i \cap B} Z^{-1}_n\zeta^{(n)}_i(y)\, \mathcal{M}_i(\textup{d}y)}{\mu_n\big(N_i(\varepsilon)\big)} \\
	&\qquad+ \mu_n(D) \frac{\int_{R_\ell \cap B} (\phi(y) - \sum_{i=1}^m Z^{-1}_n\zeta^{(n)}_i(y) \mathbbm{1}_{M_i} (y))\, \mathcal{M}(\textup{d}y)}
	{\int_{R_\ell}(\phi(y) - \sum_{i=1}^m Z^{-1}_n\zeta^{(n)}_i(y) \mathbbm{1}_{M_i} (y)) \, \mathcal{M}(\textup{d}y)} \\
	& = \int_{R_\ell\cap B} \phi(y)\, \mathcal{M}(\textup{d}y)=  \mu(B).
	\end{align*}
	Therefore, $\widetilde{\eta}_n\in C(\mu_n,\mu)$ and by 
	\eqref{Eq: Manifolds convergence from below - convergence on tubular neighbourhood} and \eqref{Eq: Manifolds convergence from below - convergence of the rest} there exists some constant $K \in (0, \infty)$ such that
	\begin{align*}
	\Big(W^p(\mu_n, \mu)\Big)^{p} 
	&\leq \int_{\mathbb{R}^d \times R_\ell} \|z-y\|^p \, \widetilde{\eta}_n(\textup{d}z, \textup{d}y)\\
	& = \sum_{i=1}^{m} \mu_n(N_i(\varepsilon)) \int_{\mathbb{R}^d \times R_\ell} \|z-y\|^p \, \widetilde{\gamma}^{(n)}_i(\textup{d}z, \textup{d}y)\\
	&\qquad+  \mu_n(D) \int_{\mathbb{R}^d \times R_\ell} \|z-y\|^p \, \widetilde{\xi}_n(\textup{d}z, \textup{d}y)
	\leq K n^{-p/2}.
	\end{align*}
	Taking the $p$th root yields the statement of the theorem.
\end{proof}

\begin{appendix}
\section*{}
\label{Sec: Appendix}

{\color{black}	\subsection{Uniformity of modes: Proof of Lemma \ref{L: Uniformity of modes}}\label{Sec: Proof for uniformity of modes}
	By a change of variables we 
	verify the desired statement of the lemma.
	\begin{proof}[Proof of Lemma \ref{L: Uniformity of modes}]
	During the proof we use for $r>0$ the notation $B_r:=B^{d-k}_r(0)$. 
	By the fact that the matrices $ A_u $ depend continuously on $u\in R_{\ell}$, there exists $ \sigma>0 $ such that $ A_u^{-1}(B_{\widetilde{\varepsilon}}) \subseteq B_{\sigma}$ and
	w.l.o.g. for $\varepsilon$ from Assumption~\ref{ass: reg_to_ell} (T), (S) we may assume\footnote{Again by the continuity of $A_u$ regarding $u$, there is a number $\widetilde{\sigma}>0$, such that $B_{\widetilde{\sigma}} \subseteq A_u^{-1}(B_{\widetilde{\varepsilon}})$. In the case $\varepsilon>\widetilde{\sigma}$, we may newly choose $\varepsilon$ equal to $\widetilde{\sigma}$ and still satisfy for this smaller value Assumption~\ref{ass: reg_to_ell} (T), (S). In the other case $\varepsilon\leq \widetilde{\sigma}$ the subset property obviously holds.} that $B_\varepsilon \subseteq A_u^{-1}(B_{\widetilde{\varepsilon}})$. In total
		\begin{equation}\label{Eq: Smallest ball for affinity property}
	B_\varepsilon \subseteq  A_u^{-1}(B_{\widetilde{\varepsilon}}) \subseteq B_{\sigma},
	\end{equation} 
			for all $ u \in R_\ell $.
		Fix $ u \in M_i $ for some $ i \in \{1, \ldots,m\} $.
		By \eqref{Eq: Measureconversion on tubular neighbourhood}, \eqref{Eq: Definition of zeta} and \eqref{Eq: Smallest ball for affinity property} we have
		\begin{align*}
		&Z_n^{-1} \zeta_i^{(n)}(u) \leq \frac{\zeta_i^{(n)}(u)}{\sum_{j=1}^{m} \int_{N_j(\sigma)} \exp(-n(\ell(x)) \pi_0(x)\, \lambda_d(\textup{d}x)}\\
		&\quad= \frac{\int_{B_\varepsilon}   \exp(-n\ell(S_i(t,u))) \pi_0(S_i(t,u)) h_i(t,u)
			\, \lambda_{d-k}(\textup{d}t)}{\sum_{j=1}^{m}\int_{M_j}\int_{B_{\sigma}}   \exp(-n\ell(S_j(t,w))) \pi_0(S_j(t,w)) h_j(t,u)
			\, \lambda_{d-k}(\textup{d}t)\, \mathcal{M}_j(\textup{d}w)}\\
		& \quad\leq \frac{\int_{A_u^{-1}(B_{\widetilde{\varepsilon}})}   \exp(-n \ell(S_i(t,u))) \pi_0(S_i(t,u)) h_i(t,u)
			\, \lambda_{d-k}(\textup{d}t)}{\sum_{j=1}^{m}\int_{M_j}\int_{A_u^{-1}(B_{\widetilde{\varepsilon}})}   \exp(-n\ell(S_j(t,w))) \pi_0(S_j(t,w)) h_j(t,u)\,
			\lambda_{d-k}(\textup{d}t)\, \mathcal{M}_j(\textup{d}w)}.
		\end{align*}
		Using \eqref{Eq: Affinity conditions} yields
		\begin{align*}
		&Z_n^{-1} \zeta_i^{(n)}(u) \leq\\
		 &
	\frac{\int_{A_u^{-1}(B_{\widetilde{\varepsilon}})}   \exp(-n\ell(S_{j^*}(A_ut,v))) \pi_0(S_{j^*}(A_ut,v)) h_{j^*}(A_ut,v)
		\, \lambda_{d-k}(\textup{d}t)}
		{\sum_{j=1}^{m}\int_{M_j}\int_{A_u^{-1}(B_{\widetilde{\varepsilon}})}   
		\exp(-n\ell(S_{j^*}(A_wt,v)))  
					\pi_0(S_{j^*}(A_wt,v)) h_{j^*}(A_wt,v)
	\lambda_{d-k}(\textup{d}t)\, \mathcal{M}_j(\textup{d}w)}.
		\end{align*}
		By a change of variables, we obtain
		\begin{align*}
		&Z_n^{-1} \zeta_i^{(n)}(u)  \\
		&\leq \frac{\det(A_u)^{-1}}{\sum_{j=1}^{m}\int_{M_j} \det(A_w)^{-1}\, \mathcal{M}_j(\textup{d}w)} \frac{ \int_{B_{\widetilde{\varepsilon}}}   
			\exp(-n\ell( S_{j^*}(t,v))) \pi_0(S_{j^*}(t,v)) h_{j^*}(t,v)
		\, \lambda_{d-k}(\textup{d}t)}{
		\int_{B_{\widetilde{\varepsilon}}}  \exp(-n\ell(S_{j^*}(t,v)))  \pi_0(S_{j^*}(t,v)) h_{j^*}(t,v)
		\, \lambda_{d-k}(\textup{d}t)}\\
		&=\frac{\det(A_u)^{-1} }{\sum_{j=1}^{m}\int_{M_j} \det(A_w)^{-1}\, \mathcal{M}_j(\textup{d}w)}.
		\end{align*}
		Due to \eqref{Eq: Affinity conditions} and the multidimensional chain rule, we have
		\begin{align*}
		\det\Big(\frac{\partial^2 (\ell \circ S_i)}{\partial^2 t}(0,u)\Big)
		&= \det\Big(\frac{\partial^2\big( (\ell \circ S_{j^*})(A_u \cdot, v)\big)}{\partial^2 t}(0)\Big)\\
		&= \det\Big(\frac{\partial^2 (\ell \circ S_{j^*})}{\partial^2 t}(0,v)\Big) \det(A_u)^2.
		\end{align*} 
		Moreover \eqref{Eq: Affinity conditions} also implies
		\begin{align*}
		\pi_0(S_j(0,w)) = \pi_0(S_{j^*}(A_w0,v)) = \pi_0(S_{j^*}(0,v)) \qquad \forall w \in M_j,\, j \in \{1,\ldots,m\}.
		\end{align*}
		Consequently, we have 
		\begin{align*}
		Z_n^{-1} \zeta_i^{(n)}(u) 
		&\leq\frac{\det(A_u)^{-1} \det\Big(\frac{\partial^2 (\ell \circ S_{j^*})}{\partial^2 t}(0,v)\Big)^{-1/2}\pi_0(S_{j^*}(0,v))}{\sum_{j=1}^{m}\int_{M_j} \det(A_w)^{-1} \det\Big(\frac{\partial^2 (\ell \circ S_{j^*})}{\partial^2 t}(0,v)\Big)^{-1/2}\pi_0(S_{j^*}(0,v))\, \mathcal{M}_j(\textup{d}w)}\\
		&\leq\frac{\det\Big(\frac{\partial^2 (\ell \circ S_{i})}{\partial^2 t}(0,u)\Big)^{-1/2}\pi_0(S_{i}(0,u))   }{\sum_{j=1}^{m}\int_{M_j}  \det\Big(\frac{\partial^2 (\ell \circ S_{j})}{\partial^2 t}(0,w)\Big)^{-1/2}\pi_0(S_{j}(0,w))\, \mathcal{M}_j(\textup{d}w)} 
		 = \phi(u),
		\end{align*}
		and \eqref{Eq: Assumption convergence happens from below} is satisfied.
	\end{proof}}

\subsection{Adapted Laplace method: Proof of Lemma \ref{L: Convergence psi, zeta}}\label{Sec: Proof of psi-zeta-convergence lemma}
For the proof of Lemma~\ref{L: Convergence psi, zeta} we use a parametrized version of Laplace's method from \cite{Korshunov}.
For convenience of the reader we provide \cite[Theorem 2]{Korshunov} and add representations of coefficient functions that can be read off from their proof.

\begin{theorem}\label{Thm: Parametrized Laplace method}
	Let $U \subseteq \mathbb{R}^s$ be a compact set, $\Theta$ a compact parameter space and $r \in \mathbb{N}$.
	Let $f : U \times \Theta \to \mathbb{R}_+$ be a function which satisfies the following: \begin{itemize}
		\item \emph{Differentiability condition:}  The functions $f_\theta \colon U \to \mathbb{R}_+$ given by 
		$t \mapsto f(t, \theta)$ are $2r+2$-times continuously differentiable for all $\theta \in \Theta$.\\[-1.5ex]
		\item \emph{Uniformity condition:} All partial derivatives of $f_\theta$ up to order $2r + 2$ are continuous in $\theta$.\\[-1.5ex]
		\item \emph{Minimum condition:} For each $\theta \in \Theta$ the function $f_\theta$ has a unique minimal point $t_0(\theta)$ 
		with $f_\theta\left(t_0(\theta)\right) = 0$ which lies in the interior of $U$.\\[-1.5ex]
		\item \emph{Tail condition: } For all $\varepsilon > 0$ holds
		\[
		\inf\left \{ f(t, \theta) \mid \norm{t - t_0(\theta)} > \varepsilon, \theta \in \Theta \right\} > 0.				
		\]
		\item \emph{Positive definiteness condition:} The Hessian $\frac{\partial^2 f}{\partial^2 t}(t_0(\theta), \theta)$ of $f_\theta$ at $t_0(\theta)$ is positive definite
		for all $\theta \in \Theta$.
	\end{itemize}
	Moreover, let $g : U \times \Theta \to \mathbb{R}$ be a function such that 
	\[
	g_\theta\colon U\to \mathbb{R}, \quad t \mapsto g(t, \theta),
	\]
	is $2r$-times continuously differentiable for all $\theta \in \Theta$, 
	and all the partial derivatives of $g_\theta$ up to order $2r$ are continuous in $\theta$.
	
	Then, there exists a constant $c \in (0, \infty)$ and there are functions
	\[
	z^{(\alpha)}_{i,j} : \Theta \to \mathbb{R}, \qquad \forall i \in\{ 0, \ldots, r-1\},\;
	j\in\{0,\dots,2r\},\; \alpha\in \mathbb{N}_0^s,	
	\]
	such that
	\[
	\left  | n^{s/2} \int_U g(t,\theta) \exp\left( -n f(t, \theta) \right) \lambda_s(\d t) - \sum_{i = 0}^{r-1} c_i(\theta) n^{-i} \right| \leq c n^{-r}, \qquad \forall \theta\in \Theta,	
	\]
	with functions $c_i\colon \Theta \to \mathbb{R}$ for $i \in\{0, \ldots, r-1\}$ given by
	\[
	c_0(\theta) = (2 \pi)^{s/2} g\left( t_0(\theta), \theta \right) \det   \left(\frac{\partial^2 f}{\partial^2 t}(t_0(\theta), \theta) \right)^{-1/2}
	\]
	and 
	\[
	c_i(\theta) = \sum_{j = 0}^{2i} \sum_{\substack{\alpha \in \mathbb{N}_0^s\\ \sum_{m= 1}^s \alpha_m = j}} 
	D^{\alpha}g_\theta (t_0(\theta)) z_{i, j}^{(\alpha)}(\theta) , \qquad i \geq 1.
	\]
	(Here $D^{\alpha}g_\theta$ denotes the partial derivative of $g_\theta$ with respect to $\alpha \in \mathbb{N}^s_0$.)
\end{theorem}

\begin{rem}\label{R: Application of Laplace's method - p = 1}
	Since the function $c_0$ in Theorem \ref{Thm: Parametrized Laplace method} is given explicitly, the statement of the theorem for the case $r=1$ becomes
	\begin{align*}
	\left  | n^{s/2} \int_U g(t,\theta) \exp\left( -n f(t, \theta) \right) \lambda_s(\d t) 
	- (2 \pi)^{s/2} g\left( t_0(\theta), \theta \right)  \det\left( \frac{\partial^2 f}{\partial^2 t}(t_0(\theta), \theta) \right)^{-1/2} \right| 
	\leq \frac{c}{n}. 
	\end{align*}
\end{rem}

\begin{rem}\label{R: Application of Laplace's method - vanashing derivatives of g}
	Assume that the function $g$ in Theorem \ref{Thm: Parametrized Laplace method} additionally satisfies $g(t_0(\theta), \theta) = 0$ and
	\[
	D^\alpha g_\theta (t_0(\theta)) = 0, \qquad \forall \alpha \in \mathbb{N}_0^s \quad \text{with} \quad \sum_{m= 1}^s \alpha_m \leq 2r - 2,
	\]
	for all $\theta \in \Theta$.
	Clearly, then the functions $c_i$ for $i \in\{0, \ldots, r-1\}$ become zero and the statement of the theorem reads
	\[
	\left  | n^{s/2} \int_U g(t,\theta) \exp\left( -n f(t, \theta) \right) \lambda_s(\d t) \right| \leq c n^{-r}, \quad \forall \theta \in \Theta.
	\]
\end{rem}

\begin{proof}[Proof of Lemma~\ref{L: Convergence psi, zeta}]
	We fix an index $i \in \{0, \ldots, m\}$ and omit it for notational convenience at all manifold quantities.	
	We apply Theorem~\ref{Thm: Parametrized Laplace method} in the setting of  $s=d-k$, $U = B_\varepsilon^{d-k}(0)$, $\Theta = M$,
	\[
	f(t,u) = (\ell \circ S)(t,u), \qquad t \in B_\varepsilon^{d-k}(0), u \in M,
	\]
	and $t_0(u) = 0$ for all $u \in M$.
	
	For proving \eqref{eq: zeta_conv} we additionally choose $r = 1$ and    
	\[
	g(t,u) =  \pi_0\big(S(t,u)\big) h(t,u), \qquad t \in B_\varepsilon^{d-k}(0), u \in M.
	\]
	Then, by the fact that $h(0,u) = 1$ for all $u \in M$ (see \eqref{Eq: Density of Q}), 
	the inequality \eqref{eq: zeta_conv} follows by Remark~\ref{R: Application of Laplace's method - p = 1}.

	For proving \eqref{eq: psi_conv} we distinguish two cases:
	
	\emph{1. Case:} We assume that $p$ is even and additionally (to the setting for applying Theorem~\ref{Thm: Parametrized Laplace method}) we choose 
	$r = p/2$
	and 
	\[
	g(t,u) = \norm{t}^p \pi_0\big(S(t,u)\big) h(t,u), \qquad t \in B_\varepsilon^{d-k}(0), u \in M.
	\]
	Using \cite[Proposition 5]{Hardy} and the multinomial theorem, we obtain that all the partial derivatives up to order $2r -2 = p - 2 $ of $g_u\colon B_\varepsilon^{d-k}(0) \to \mathbb{R}$ (defined by $t \mapsto g(t,u)$) are zero
	at $t=0$ for all $u \in M$.
	Therefore, we are in the setting of Remark \ref{R: Application of Laplace's method - vanashing derivatives of g} 
	and Theorem \ref{Thm: Parametrized Laplace method} yields
	\begin{align*}
	n^{(d-k)/2} \psi^{(n)}(u) 
	&\leq c n^{-p/2}
	\end{align*}
	for some constant $c \in (0, \infty)$ and all $n \in \mathbb{N}$.
	
	\emph{2. Case:} We assume that $p$ is odd and consider $\zeta^{(n)}(u)$ as normalizing constant to apply H\"older's inequality for a $p$-norm w.r.t. a suitable probability measure. 
	We obtain 
	\begin{align*}
	\allowdisplaybreaks
	&n^{(d-k)/2} \psi^{(n)}(u) \\
	&= n^{(d-k)/2} \int_{B_\varepsilon^{d-k}(0)} \|t\|^p  \exp\big(-n\ell(S(t,u))\big) \pi_0\big(S(t,u)\big) h(t,u)
	\, \lambda_{d-k}(\d t)  \\
	&= n^{(d-k)/2} \zeta^{(n)}(u) \\
	&  \quad \Big( \Big( \int_{B_\varepsilon^{d-k}(0)} \|t\|^p  \frac{\exp\big(-n\ell(S(t,u))\big) 
		\pi_0\big(S(t,u)\big) h(t,u)}{\zeta^{(n)}(u)}  \, \lambda_{d-k}(\d t) \Big)^{1/p} \Big)^p\\	
	&\leq \left(n^{(d-k)/2} \zeta^{(n)}(u) \right)^{1/(p+1)} \\
	& \quad \Big( n^{(d-k)/2} \int_{B_\varepsilon^{d-k}(0)} \|t\|^{p+1}  
	\exp\big(-n\ell(S(t,u))\big) \pi_0\big(S(t,u)\big) h(t,u) \, \lambda_{d-k}(\d t) \Big)^{\frac{p}{p+1}}.
	\end{align*}
	The first factor can be bounded by a positive constant independently of $n \in \mathbb{N}$ and $u \in M$
	according to \eqref{eq: zeta_conv}, see also Remark~\ref{R: Bound on normalizing constant for manifolds}. The second factor, can be bounded by $c^{p/(p+1)} n^{-p/2}$ by applying the statement of the 1. case with the even number $p+1$. 
\end{proof}

\subsection{Convergence of $W^p(\nu_n, \mu)$: Proof of Lemma~\ref{L: Wassersteinconvergence of intermediate measure}} \label{Sec: Convergence of intermediate step}

For proving Lemma~\ref{L: Wassersteinconvergence of intermediate measure} we use 
an estimate of the $p$-Wasserstein distance formulated in the following auxiliary lemma. 


\begin{lem} \label{L: Coupling lemma for intermediate step}
	For compact $E\subseteq \mathbb{R}^d$ let $(E, \mathcal{E}, \rho)$ be a finite measure space. Let $g\colon E \to [0,\infty)$ and $g_n\colon E \to [0,\infty)$ for any $n\in\mathbb{N}$ be  probability density functions w.r.t. $\rho$ such that
	there exists a $K\in (0,\infty)$ and a $q>0$ with
	\begin{equation}  \label{eq: densities_close}
	\sup_{x \in E} |g_n(x) - g(x)| \leq Kn^{-q}, \quad  n\in\mathbb{N}.
	\end{equation}
	Define the
	probability measures $\tau(\d x):= g(x) \rho(\d x)$, $\tau_n(\d x):= g_n(x) \rho(\d x)$ 
	and	let $p \in \mathbb{N}$.
	Then, there exists a constant $K \in (0, \infty)$ such that
	\[
	W^p(\tau_n, \tau) \leq K n^{-q/p}, \quad \forall\, n\in\mathbb{N}.
	\] 
\end{lem}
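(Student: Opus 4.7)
The plan is to construct an explicit coupling $\gamma_n \in C(\tau_n, \tau)$ that shares as much mass as possible on the diagonal and then to exploit the compactness of $E$ to control the remaining off-diagonal contribution.

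First I would introduce the overlap density $m_n(x) := \min\{g(x), g_n(x)\}$ and the overlap total $\alpha_n := \int_E m_n(x)\,\rho(\d x)$. Since $g, g_n$ are both $\rho$-probability densities, their positive and negative parts satisfy $\int_E (g_n - g)^+ \d\rho = \int_E (g - g_n)^+ \d\rho = 1 - \alpha_n$. Assuming first $\alpha_n < 1$, I would define
\[
 \gamma_n(\d x, \d y) := m_n(x)\, \delta_x(\d y)\, \rho(\d x)
 + \frac{1}{1-\alpha_n}\,(g_n - g)^+(x)\, (g - g_n)^+(y)\, \rho(\d x)\, \rho(\d y),
\]
(and simply $\gamma_n(\d x,\d y) = g(x)\delta_x(\d y)\rho(\d x)$ in the degenerate case $\alpha_n = 1$). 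A direct calculation of the two marginals, using $m_n + (g_n - g)^+ = g_n$ and $m_n + (g - g_n)^+ = g$, confirms $\gamma_n \in C(\tau_n, \tau)$.

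Next I would bound the transport cost. The diagonal piece contributes $0$ since $\|x-y\|^p$ vanishes there. For the product piece, compactness of $E$ yields a finite diameter $D := \sup_{x,y \in E} \|x-y\| < \infty$, so
\[
 \int_{E \times E} \|x - y\|^p\, \gamma_n(\d x, \d y)
 \leq D^p \cdot \frac{1}{1 - \alpha_n} \cdot (1 - \alpha_n)^2
 = D^p\,(1 - \alpha_n).
\]
To conclude, I would bound $1 - \alpha_n$ via the identity $1 - \alpha_n = \tfrac{1}{2}\int_E |g_n - g|\,\d\rho$ (which follows from both densities integrating to one), and then apply the uniform hypothesis \eqref{eq: densities_close}:
\[
 1 - \alpha_n \leq \tfrac{1}{2}\, \rho(E) \sup_{x \in E} |g_n(x) - g(x)|
 \leq \tfrac{1}{2} K \rho(E)\, n^{-q}.
\]
Combining these estimates and taking $p$-th roots gives $W^p(\tau_n, \tau) \leq \widetilde K\, n^{-q/p}$ for a suitable constant $\widetilde K \in (0,\infty)$, as claimed.

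I expect the main care point to be the verification that $\gamma_n$ indeed has the correct marginals and that the formula is handled consistently in the edge case $\alpha_n = 1$; beyond this, the argument is a bookkeeping exercise relying only on the compactness of $E$ (to provide the diameter $D$) and the finite total mass $\rho(E)$ (to convert the $L^\infty$-bound on densities into an $L^1$-bound on total variation). The fact that the rate $n^{-q}$ in total variation translates into $n^{-q/p}$ in the $p$-Wasserstein distance is precisely what one would expect from the naive inequality $W^p(\cdot,\cdot)^p \leq D^p \cdot \mathrm{TV}(\cdot,\cdot)$ on compact metric spaces, which is essentially what this coupling realises.
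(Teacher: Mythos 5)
Your proof is correct and takes essentially the same route as the paper's: both reduce the $p$-Wasserstein cost to a total-variation bound via the discrete-metric inequality $\|x-y\|^p \le \kappa^p \mathbbm{1}_{x\ne y}$ on the compact set $E$, then bound $\|\tau_n-\tau\|_{\mathrm{tv}} = \tfrac12\int_E|g_n-g|\,\d\rho$ by $\tfrac12 K\rho(E)\,n^{-q}$. The only difference is cosmetic: the paper cites the known identity $\inf_{\gamma\in C(\tau_n,\tau)}\int \mathbbm{1}_{x\ne y}\,\d\gamma = \|\tau_n-\tau\|_{\mathrm{tv}}$ as a black box, whereas you explicitly construct the maximal coupling that achieves that infimum.
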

\begin{proof}
	By the fact that $E$ is compact, there exists a number $\kappa\in(0,\infty)$ satisfying that $\sup_{x,y\in E} \Vert x-y \Vert \leq \kappa$. For all $x,y\in E$ this implies
	$
	\Vert x- y \Vert \leq \kappa \cdot \mathbbm{1}_{x\not=y}.
	$
	Then
	\begin{align*}
	W^p(\tau_n,\tau)^p & = \inf_{\gamma \in C(\tau_n, \tau)} \int_{E\times E} \|x-y\|^p \gamma(\textup{d}x, \textup{d}y)
	\leq \kappa^p  \inf_{\gamma \in C(\tau_n, \tau)} \int_{E \times E} \mathbbm{1}_{x\not=y}  \gamma(\textup{d}x, \textup{d}y)\\
	& = \kappa^p  \Vert \tau_n - \tau \Vert_{\rm tv} =
	\frac{1}{2} \kappa^p \int_E  \vert g_n(x)-g(x)\vert \rho(\d x) \leq \frac{1}{2} \kappa^p K \rho(E) n^{-q},
	\end{align*}
	where we have used two well-known identities concerning the total variation distance $\Vert \cdot \Vert_{\rm tv}$, see e.g. \cite[Lemma~3.4]{eberle}.
\end{proof}


The required estimate of the Wasserstein distance between $\nu_n$ and $\mu$ 
is a direct application of the previous lemma.

\begin{proof}[Proof of Lemma \ref{L: Wassersteinconvergence of intermediate measure}]
	
	We aim to apply Lemma~\ref{L: Coupling lemma for intermediate step} with $E=R_\ell$, $\rho=\sum_{i=1}^m \mathcal{M}_i$ and $\tau_n=\nu_n$, $\tau=\mu$, where
	\[
	g_n(u)=\frac{\sum_{i=1}^m \zeta^{(n)}_i(u) \mathbbm{1}_{M_i}(u)}{\sum_{i=1}^m \int_{M_i}\zeta^{(n)}_i(v) \, \mathcal{M}_i(\textup{d}v)}
	\qquad \text{and} \qquad
	g(u)= \begin{cases}
	\phi(u) & u\in R'\\
	0	& \text{otherwise}.
	\end{cases}
	\]
	With $1 \leq i \leq m$ and $u \in M_i$
	we define for abbreviating the notation
	\[
	q_i(u) := \pi_0(S_i(0,u)) \det\Big(\frac{\partial^2 (\ell \circ S_i)}{\partial^2 t}(0,u)\Big)^{-1/2}.
	\]
	By virtue of Lemma~\ref{L: Convergence psi, zeta}, by using similar arguments as in Remark~\ref{R: Bound on normalizing constant for manifolds}, there are constants $C_{i,1}$, $C_{i,2} \in (0,\infty)$ such that for all $u\in M_i$, $n\in\mathbb{N}$ we have
	\begin{equation}\label{Eq: Bounds on n^(d-k/2)zeta_n}
	C_{i,1}^{-1} \min_{u \in M_i} q_i(u) \leq \left(\frac{n}{2\pi}\right)^{(d-k_i)/2} \zeta_{i}^{(n)}(u) \leq C_{i,2} \max_{u \in M_i} q_i(u).
	\end{equation}
	Now, for verifying \eqref{eq: densities_close} we distinguish two cases:
	
	\textbf{1. Case:}
	For $1\leq i \leq m$ and some $M_i\subseteq R'$ we assume that $u\in M_i$. Then  
	\begin{align*}
	& \left|\frac{\sum_{j=1}^m \zeta^{(n)}_j(u) \mathbbm{1}_{M_j}(u)}{\sum_{j=1}^m \int_{M_j}\zeta^{(n)}_j(v) 
		\, \mathcal{M}_j(\textup{d}v)} - \phi(u)\right| \\
	&  = \Bigg|\frac{\left(\frac{n}{2\pi}\right)^{(d-k)/2}\zeta^{(n)}_i(u)\sum_{M_j\subseteq R'}\int_{M_j} q_j(v) 
		\, \mathcal{M}_j(\textup{d}v)}
	{(2\pi)^{\frac{k-d}{2}}\left(\sum_{j=1}^m \int_{M_j} n^{\frac{d-k}{2}}\zeta^{(n)}_j(v) \, \mathcal{M}_j(\textup{d}v)\right)
		\left(\sum_{M_j \subseteq R'}\int_{M_j} q_j(v) \, \mathcal{M}_j(\textup{d}v)\right)} \\
	&\quad - \frac{ q_i(u)\sum_{j=1}^m \int_{M_j} \left(\frac{n}{2\pi}\right)^{(d-k)/2}\zeta^{(n)}_j(v) \, \mathcal{M}_j(\textup{d}v)}
	{(2\pi)^{\frac{k-d}{2}}\left(\sum_{j=1}^m \int_{M_j} n^{\frac{d-k}{2}}\zeta^{(n)}_j(v) \, \mathcal{M}_j(\textup{d}v)\right)
		\left(\sum_{M_j\subseteq R'}\int_{M_j} q_j(v) \, \mathcal{M}_j(\textup{d}v)\right)} \Bigg|.
	\end{align*}
	Applying \eqref{low_bnd_ext1} in the second factor of the denominator to those terms with $k_j=k$ and neglecting the others leads to the fact that the denominator is bounded from below by a positive constant that is independent from $n$. 
	As $u \in M_i \subseteq R'$, for the numerator holds by Lemma \ref{L: Convergence psi, zeta}
	\begin{align*}
	&\left|\left(\frac{n}{2\pi}\right)^{\frac{d-k}{2}} \zeta^{(n)}_i(u)\sum_{M_j\subseteq R'}\int_{M_j} q_j \, \textup{d}\mathcal{M}_j
	- q_i(u)\sum_{j=1}^m \int_{M_j} \left(\frac{n}{2\pi}\right)^{\frac{d-k}{2}} \zeta^{(n)}_j \, \textup{d} \mathcal{M}_j\right|\\
	&\qquad \leq\left(\sum_{M_j\subseteq R'}\int_{M_j} q_j \, \textup{d}\mathcal{M}_j\right) 
	\left|\left(\frac{n}{2\pi}\right)^{\frac{d-k}{2}}\zeta^{(n)}_i(u) -q_i(u)\right|\\
	&\qquad \qquad + q_i(u)\left| \sum_{M_j \subseteq R'}\int_{M_j} q_j \, \textup{d}\mathcal{M}_j
	- \sum_{j=1}^m \int_{M_j}\left( \frac{n}{2\pi}\right)^{\frac{d-k}{2}}\zeta^{(n)}_j\, \textup{d}\mathcal{M}_j\right|\\
	&\qquad \leq\sum_{M_j \subseteq R'}\int_{M_j} q_j \, \textup{d}\mathcal{M}_j \cdot \widehat{c}_i n^{-1}
	+ q_i(u) \sum_{M_j \subseteq R'}\widehat{c}_j\mathcal{M}_j(M_j)  n^{-1}\\  
	&\qquad \qquad + q_i(u) \sum_{M_j\nsubseteq R'} C_{j,2} \max_{v \in M_j} q_j(v) \mathcal{M}_j(M_j) 
	\cdot  \left(\frac{n}{2\pi}\right)^{(k_j-k)/2}\\
	&\qquad \leq\left(\sum_{M_j \subseteq R'}\int_{M_j} q_j \, \textup{d}\mathcal{M}_j \widehat{c}_i 
	+\max_{v \in M_i} q_i(v) \sum_{M_j \subseteq R'}\widehat{c}_j\mathcal{M}_j(M_j) \right) n^{-1}  \\
	&\qquad \qquad	+ \max_{v \in M_i} q_i(v) \sum_{M_j\nsubseteq R'} C_{j,2} \max_{v \in M_j} p_j(v) \mathcal{M}_j(M_j)(2 \pi)^{(k-k_j)/2}
	\cdot n^{(k_j-k)/2}.
	\end{align*}
	
	\textbf{2. Case:} Assume that $u \in M_i \subseteq R_\ell\setminus R'$. Then
	by \eqref{Eq: Bounds on n^(d-k/2)zeta_n}
	we obtain
	\begin{align*}
	&\left|\frac{\sum_{j=1}^m \zeta^{(n)}_j(u) \mathbbm{1}_{M_j}(u)}{\sum_{j=1}^m \int_{M_j}\zeta^{(n)}_j(v)
		\, \mathcal{M}_j(\textup{d}v)} - \phi(u)\right| 
	= \Bigg|\frac{\left(\frac{n}{2\pi}\right)^{(d-k)/2}\zeta^{(n)}_i(u)}
	{\sum_{j=1}^m  \int_{M_j}\left(\frac{n}{2\pi}\right)^{(d-k)/2}\zeta^{(n)}_j(v)\, \mathcal{M}_j(\textup{d}v)} \Bigg|\\
	& \qquad\leq \Bigg|C_{j,1} C_{j,2} \frac{(2 \pi)^{-(k_i-k)/2}\max_{v \in M_i} q_i(v)}{\sum_{M_j\subseteq R'}\min_{v \in M_j} q_j(v) 
		\mathcal{M}_j(M_j)} \Bigg|n^{(k_i - k)/2}.
	\end{align*}
	
	Taking the cases together and using the fact that the remaining $k_j$ and $k_i$ are strictly smaller than $k$ leads to $n^{(k_j - k)/2} \leq n^{-1/2}$ and $n^{(k_i - k)/2} \leq n^{-1/2}$.
	Hence, there exists a constant $K\in(0,\infty)$ that is independent of $u \in R_\ell$ which satisfies
	\[
	\left| \frac{\sum_{j=1}^m \zeta^{(n)}_j(u) \mathbbm{1}_{M_j}(u)}{\sum_{j=1}^m \int_{M_j}\zeta^{(n)}_j(v) \, \mathcal{M}_j(\textup{d}v)}
	- \phi(u)\right| \leq K n^{-1/2}
	\]
	for all $u \in R_\ell$.
	Therefore, Lemma \ref{L: Coupling lemma for intermediate step} yields the desired estimate.
\end{proof}

\end{appendix}

%
%
%
%
%
\begin{acks}[Acknowledgments]
The authors thank Valentin de Bortoli, Tapio Helin, Remo Kretsch\-mann, Thomas M\"uller-Gronbach, Claudia Schillings and Philipp Wacker for fruitful discussions about this topic. 
\end{acks}
%
\begin{funding}
Mareike Hasenpflug gratefully acknowledges support of the DFG within pro\-ject 432680300 -- SFB 1456 subproject B02. 
\end{funding}

\end{document}